\newif\ifpublic
\theoremstyle{plain}
\DeclareMathOperator{\Per}{Per}
\newcommand{\Hole}{\textrm{Hole}}
\definecolor{olive}{rgb}{0.3, 0.4, .1}
\definecolor{fore}{RGB}{249,242,215}
\definecolor{back}{RGB}{51,51,51}
\definecolor{title}{RGB}{255,0,90}
\definecolor{dgreen}{rgb}{0.,0.4,0.}
\definecolor{gold}{rgb}{1.,0.84,0.}
\definecolor{JungleGreen}{cmyk}{0.99,0,0.52,0}
\definecolor{bgreen}{cmyk}{0.85,0,0.33,0}
\definecolor{RawSienna}{cmyk}{0,0.72,1,0.45}
\definecolor{Magenta}{cmyk}{0,1,0,0}
\newtheorem{theorem}{Theorem}
\newtheorem{cor}[theorem]{Corollary}
\newtheorem{lemma}[theorem]{Lemma}
\newtheorem{prop}[theorem]{Proposition}
\theoremstyle{definition}
\newtheorem*{defn}{Definition}
\newtheorem{claim}[theorem]{Claim}
\newtheorem{corollary}[theorem]{Corollary}
\newtheorem{conjecture}[theorem]{Conjecture}
\theoremstyle{remark}
\newtheorem{remark}[theorem]{Remark}
\newcommand{\eps}{\epsilon}
\newcommand{\G}{{\cal G}}
\newcommand{\Z}{\mathbb{Z}}
\newcommand{\supp}{\textrm{supp}}
\newcommand{\remove}[1]{}
\begin{document}

\begin{frontmatter}[classification=text]


\author[david]{David Ellis}
\author[ehud]{Ehud Friedgut\thanks{Supported by I.S.F. grant 1168/15 and by Minerva grant 712023.}}
\author[guy]{Guy Kindler}
\author[amir]{Amir Yehudayoff\thanks{Supported by I.S.F. grant 503/11}}

\begin{abstract}
The Loomis-Whitney inequality, and the more general Uniform Cover inequality, bound the volume of a body in terms of a product of the volumes of lower-dimensional projections of the body. In this paper, we prove stability versions of these inequalities, showing that when they are close to being tight, the body in question is close in symmetric difference to a box. Our results are best possible up to a constant factor depending upon the dimension alone. Our approach is information theoretic.

We use our stability result for the Loomis-Whitney inequality to obtain a stability result for the edge-isoperimetric inequality in the infinite $d$-dimensional lattice. Namely, we prove that a subset of $\mathbb{Z}^d$ with small edge-boundary must be close in symmetric difference to a $d$-dimensional cube. Our bound is, again, best possible up to a constant factor depending upon $d$ alone.
\end{abstract}
\end{frontmatter}

\section{Introduction}
\label{sec:intro}

In this paper, we prove stability results for the Loomis-Whitney
inequality and some of its generalisations. Let us start by describing
these results. 
\subsection{Projection inequalities}
\paragraph{The Loomis-Whitney inequality.}

The Loomis-Whitney inequality~\cite{loomis1949} bounds the volume of a
$d$-dimensional body in terms of the volumes of its $(d-1)$-dimensional projections. It states
that every body $S$ in $\mathbb{R}^d$ satisfies
\begin{equation} \label{eq:lw} \mu(S)^{d-1} \leq \prod_{i \in [d]}
  \mu(\pi_{[d] \setminus \{i\}} (S)),
\end{equation}
where $\mu$ denotes Lebesgue measure, $[d]:=\{1,2,\ldots,d\}$, and for
$g \subset [d]$, we denote by $\pi_g(S)$ the projection of $S$ onto the coordinates of $g$ (i.e., the projection onto the subspace $\{x \in \mathbb{R}^d:\ x_i=0\ \forall i \notin g\}$).  Here, we say that $S \subset \mathbb{R}^d$ is a {\em body} if it is an open set with compact closure. Note that if $S$ is a Cartesian product of subsets of
$\mathbb{R}$, then equality holds in \eqref{eq:lw}; we call such a body a {\em box}.

\paragraph{The Box Theorem and the Uniform Cover inequality.} The Box Theorem of Bollob\'as and Thomason \cite{bollobas-thomason} is a simultaneous generalisation and strengthening of the Loomis-Whitney inequality.  It states that for any body $S \subset
\mathbb{R}^d$, there exists a box $B \subset \mathbb{R}^d$ with the
same measure as $S$, such that $\mu(\pi_{g}(S)) \geq \mu(\pi_{g}(B))$
for all $g \subset [d]$. Bollob\'as and Thomason show that this is equivalent to the so-called `Uniform Cover inequality' of Chung, Frankl, Graham and Shearer~\cite{chung1986some}. We say that a family $\mathcal{G} \subset \mathcal{P}([d])$ is a {\em uniform $m$-cover} if every $i \in [d]$ belongs to exactly
$m$ of the sets in $\G$, and that $\mathcal{G}$ is a {\em uniform cover} if $\mathcal{G}$ is a uniform $m$-cover for some $m \in \mathbb{N}$. The Uniform Cover inequality states that for any body $S \subset \mathbb{R}^d$, any $m \in \mathbb{N}$, and any uniform $m$-cover $\mathcal{G} \subset \mathcal{P}([d])$, we have
\begin{equation}\label{eq:UC}
\mu(S)^{m} \leq \prod_{g \in \G} \mu(\pi_g (S)).
\end{equation}
The Uniform Cover inequality is sharp when $S$ is a box.

Applying the Uniform Cover inequality to sets which are unions of axis-parallel unit cubes, implies that for any finite $S \subset \mathbb{Z}^d$, any $m \in \mathbb{N}$, and any uniform $m$-cover $\G \subset \mathcal{P}([d])$, we have
\begin{equation}\label{eq:UC-finite}
|S|^{m} \leq \prod_{g \in \G} |\pi_g (S)|.
\end{equation}
(Here, of course, $|S|$ denotes the cardinality of the set $S$.) Since increasing the size of a set $g \in \mathcal{G}$ can only increase the right-hand side of (\ref{eq:UC-finite}), it follows that for any set family $\mathcal{G} \subset \mathcal{P}([d])$, and any finite $S \subset \mathbb{Z}^d$, we have
\begin{equation}
\label{eq:UC-finite-extended}
|S|^{m(\G)} \leq \prod_{g \in \G} |\pi_g (S)|,
\end{equation}
where $m(\G)$ denotes the minimum integer $m$ such that every $i \in [d]$ belongs to at least $m$ of the sets in $\mathcal{G}$.

In fact, \eqref{eq:UC-finite} implies \eqref{eq:UC}, by a standard approximation argument, approximating a body $S \subset \mathbb{R}^d$ by a union of cubes in a sufficiently fine grid, as outlined in \cite{loomis1949}. Note, however, that the analogue of \eqref{eq:UC-finite-extended} for bodies in $\mathbb{R}^d$ (with Lebesgue measure) does not necessarily hold if $\mathcal{G}$ is not a uniform cover, as can be seen by taking $S$ to be a $d$-dimensional axis-parallel cube of side-length less than 1. (Roughly speaking, the approximation argument requires both sides of (\ref{eq:UC}) to scale by the same factor, when $S$ is dilated; see the proof of Corollary \ref{corr:uniform-cover-stability} in Section \ref{Lemmas} below.)



\paragraph{Shearer's Lemma.} Loomis and Whitney, and Bollob\'as and Thomason, 
 proved their results using induction on the dimension, and H\"{o}lder's inequality. However, the discrete versions of the Loomis-Whitney and Uniform Cover inequalities (which are equivalent to the continuous ones) are special cases of Shearer's Entropy Lemma. The term `Shearer's Lemma' is often used to refer to two essentially equivalent results. The first is stated in terms of the entropy of a random variable, first proved by Shearer (implicitly) in 1978, and first published by Chung, Frankl, Graham and Shearer in 1986 in \cite{chung1986some}. The second, from the same paper, is equation \eqref{eq:UC}, the result we referred to above as the Uniform Cover inequality. The entropy approach for proving these inequalities is the one we adopt in this paper.

\bigskip

As well as being very natural statements in their own right, the above results have many applications: for example in convex geometry (see \cite{ball-handbook}), in the study of isoperimetric problems (see \cite{loomis1949}), in extremal combinatorics (see \cite{chung1986some}) and in additive combinatorics (see \cite{ballister-bollobas}).
There are also several useful generalisations of Shearer's Lemma, such as the weighted version in~\cite{Friedgut04}, which is itself a special case of the Brascamp-Lieb inequality~\cite{brascamp-lieb}. In~\cite{Ball89}, Ball stated and applied the geometric version of the Brascamp-Lieb inequality to study sections of the Euclidean cube. The results in this paper apply directly to several of these generalisations. 

\subsubsection*{Stability versions}

When a geometric, combinatorial or functional inequality is sharp, it
is natural to ask whether it is also `stable' --- i.e., when the inequality is
{\em almost} sharp for a particular object, must that
object be close in structure to an {\em extremal} one (meaning, an object for which equality holds)?

Stability phenomena for geometric and functional inequalities have
been widely studied in recent years. To obtain a stability result for
an inequality, it is natural to look closely at known proofs of the inequality and
see what information these proofs yield about objects where equality
`almost' holds. Several methods for proving geometric inequalities
have recently been shown to yield best-possible (or close to
best-possible) stability results. A partial list includes symmetrization
techniques (see e.g. \cite{fusco2008sharp}), optimal transport (see
e.g. \cite{figalli2010mass,cicalese}), spectral techniques (see
e.g. \cite{bianchi}), and non-linear evolution equations (see
e.g. \cite{dolbeault}). Stability phenomena for combinatorial
inequalities have also been widely studied, and best-possible (or
close to best-possible) stability results have been obtained via
elementary combinatorial arguments (see
e.g. \cite{simonovits,frankl}), and using spectral techniques (see
e.g. \cite{friedgut-tintersecting,deza-frankl-stability}), Fourier
analysis (see e.g. \cite{friedgut-tintersecting,friedgut-junta}) and
`non-Abelian' Fourier analysis (see
e.g. \cite{deza-frankl-stability,EFF1,EFF2}).

Our main result in this paper is a stability result for the Uniform Cover inequality. To state it, we need some more notation. We define a {\em box} in $\mathbb{Z}^d$ to be a Cartesian product of finite subsets of $\mathbb{Z}$. For a collection
of subsets of coordinates $\mathcal{G} \subset \mathcal{P}([d])$, we
denote by $\sigma(\G)$ the maximum integer $\sigma$ such that for every $i,j \in [d]$ with $i \neq j$,
there are at least $\sigma$ sets in $\G$ containing $i$ but not $j$. If $\sigma(\G) >0$, then we define
$$\rho(\G) := \frac{m(\G)}{\sigma(\G)}.$$
We prove the following stability result for the inequality (\ref{eq:UC-finite-extended}).
\begin{theorem}
\label{thm:uniform-cover-stability}
For every integer $d \ge 2$ there exists $b = b(d)>0$ such that the following holds. Let $\mathcal{G} \subset \mathcal{P}([d])$ with $m(\G), \sigma(\G) > 0$.  Let $S \subset \mathbb{Z}^d$ with $|S| < \infty$. If 
$$|S| \geq (1-\eps) \left( \prod_{g \in \G} |\pi_g (S)| \right)^{1/m(\G)},$$
then there exists a box $B \subset \mathbb{Z}^d$ such that
$$|S \Delta B| \leq b  \rho(\G) \eps \,|S|.$$
\end{theorem}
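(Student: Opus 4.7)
The approach is information-theoretic, adapting the entropy proof of Shearer (and hence of the Uniform Cover inequality). Let $X = (X_1,\ldots,X_d)$ be uniformly distributed on $S$, so $H(X) = \log_2 |S|$ and $H(X_g) \le \log_2 |\pi_g(S)|$ for every $g \subset [d]$. Shearer's Lemma says $m(\mathcal{G}) H(X) \le \sum_{g \in \mathcal{G}} H(X_g)$; combining this with the hypothesis of the theorem forces both the Shearer deficit $\sum_g H(X_g) - m(\mathcal{G}) H(X)$ and the projection deficit $\sum_g (\log |\pi_g(S)| - H(X_g))$ to be at most $O(m(\mathcal{G}) \epsilon)$. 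In particular, each projection $X_g$ is close in KL-divergence to the uniform distribution on $\pi_g(S)$, and Shearer's inequality is nearly tight for $X$.

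Expanding Shearer's chain-rule proof with respect to an arbitrary linear ordering of $[d]$, the Shearer deficit rewrites as a sum of nonnegative conditional mutual informations
\[
\sum_{g \in \mathcal{G},\ i \in g} I(X_i;\, X_{[i-1] \setminus g} \mid X_{g \cap [i-1]}) + \sum_{i \in [d]} (\deg_\mathcal{G}(i) - m(\mathcal{G}))\,H(X_i \mid X_{[i-1]}),
\]
with total at most $O(m(\mathcal{G}) \epsilon)$. The hypothesis $\sigma(\mathcal{G}) > 0$ guarantees that for every ordered pair $j < i$, there are at least $\sigma(\mathcal{G})$ sets $g \in \mathcal{G}$ with $i \in g$ and $j \notin g$; the corresponding slack terms each dominate the conditional mutual information $I(X_i;\, X_j \mid X_{g \cap [i-1]})$. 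Averaging these $\sigma(\mathcal{G})$ contributions against the global slack bound produces pairwise conditional-MI bounds of order $m(\mathcal{G})/\sigma(\mathcal{G})\cdot\epsilon = \rho(\mathcal{G})\epsilon$. Assembled carefully (via the chain rule for mutual information, or by varying the ordering and averaging), these yield that $X$ is close in KL-divergence to a product distribution with marginals close to uniform on sets $A_i \subset \pi_i(S)$.

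Take $B := A_1 \times \cdots \times A_d$ as the candidate box, where each $A_i \subset \pi_i(S)$ is an appropriately thresholded set of typical values for coordinate $i$ (say, those for which the conditional distribution of the remaining coordinates matches the product prediction up to small error). A union bound over atypical-coordinate events, each bounded by an entropy-deficit contribution, gives $|S \setminus B| \le O(\rho(\mathcal{G})\epsilon\,|S|)$. On the other hand, applying the Uniform Cover inequality to $B$ and using $|\pi_g(B)| = \prod_{i \in g}|A_i|$ together with the hypothesis yields $|B| \le (1 + O(\rho(\mathcal{G})\epsilon))\,|S|$, whence $|B \setminus S| = |B| - |S| + |S \setminus B|$ is also $O(\rho(\mathcal{G})\epsilon\,|S|)$, as required. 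The main technical obstacle is maintaining a \emph{linear} dependence on $\epsilon$: a naive translation of entropy deficits to $L^1$ distance via Pinsker's inequality would only yield a bound of order $\sqrt{\rho(\mathcal{G})\epsilon}$. This must be circumvented by exploiting the discrete, uniform nature of $X$ — near-uniformity and near-independence must be reformulated as combinatorial statements about the typical sizes and overlaps of fibres, and converted to a box via a direct counting argument rather than a generic divergence-to-$L^1$ step.
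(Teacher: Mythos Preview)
Your information-theoretic half is essentially the paper's Lemma (TightToInfo): from the $\epsilon$-tightness hypothesis one extracts, via the chain-rule expansion of Shearer and the definition of $\sigma(\mathcal{G})$, the bound $I(X_i;\,X_{[d]\setminus\{i\}}) \le 2\rho(\mathcal{G})\epsilon$ for each $i$. So far so good, and you correctly flag the main obstacle: Pinsker would only give $O(\sqrt{\rho\epsilon})$.

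The gap is in the box-finding half. Your step ``applying the Uniform Cover inequality to $B$ and using $|\pi_g(B)| = \prod_{i\in g}|A_i|$ together with the hypothesis yields $|B| \le (1+O(\rho\epsilon))|S|$'' does not work: the hypothesis controls $\prod_g |\pi_g(S)|$, not $\prod_g |\pi_g(B)|$, and in general $\pi_g(B)=\prod_{i\in g}A_i$ is \emph{not} contained in $\pi_g(S)$, so there is no inequality linking the two products. More fundamentally, ``appropriately thresholded'' is doing all the work here, and a one-shot threshold on $p_i$ does not give you simultaneously (a) small lost mass and (b) a pointwise lower bound $p_i(x_i)\ge c/|A_i|$ on the survivors --- both of which are needed to control $|B\setminus S|$ with \emph{linear} $\epsilon$-dependence.

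The paper supplies exactly the two missing ingredients. First, it introduces the \emph{hole-weight} $\mathrm{Hole}_{g_1,\ldots,g_r}(S):=\sum_{x\notin S}\prod_j p_{g_j}(x)$ and proves directly (no Pinsker) that it is bounded above by the KL-divergence $D(p\,\|\,\prod_j p_{g_j})$. Second, it proves a two-dimensional \emph{iterative trimming lemma}: viewing $S\subset \mathbb{Z}^{\{i\}}\times\mathbb{Z}^{[d]\setminus\{i\}}$, one repeatedly deletes from $\pi_i(S)$ those points whose fibre is shorter than $(1-\alpha)$ times the \emph{current} average fibre length, and shows the total mass removed is at most $2\,\mathrm{Hole}_i(S)/\alpha$. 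The survivors $R_i$ then satisfy $p_i(x_i)\ge (1-1/d)^2/|R_i|$ for every $x_i\in R_i$ (with $\alpha=1/d$). That pointwise lower bound gives $\prod_i p_i(x_i)\ge 1/(16|R|)$ on $R=\prod_i R_i$, so $|R\setminus S|/(16|R|)\le \mathrm{Hole}_{(1,\ldots,d)}(S)\le D(p\,\|\,\prod_i p_i)=\sum_{i\ge 2} D(p_{[i]}\,\|\,p_{[i-1]}p_i)\le 2d\rho\epsilon$. This, not a size comparison $|B|\lesssim|S|$, is how $|R\setminus S|$ is controlled linearly in $\epsilon$.
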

Our proof yields $b(d) = 4d^2+64d$. This theorem is best-possible in terms of its dependence upon $\epsilon$, as can be seen by taking $S = [a]^d \setminus [a']^d$, where $(a'/a)^d = \epsilon < 2^{-d}$, and taking $\G = [d]^{(d-1)}$.

Theorem \ref{thm:uniform-cover-stability} easily implies the following analogue for bodies (with uniform covers), via the standard approximation argument outlined in \cite{loomis1949}, and referred to above.

\begin{corollary}
\label{corr:uniform-cover-stability}
For every integer $d \ge 2$ there exists $b = b(d) > 0$ such that the following holds. Let $m \in \mathbb{N}$, and let $\G \subset \mathcal{P}([d])$ be a uniform $m$-cover with $\sigma(\G)>0$. Let $S \subset \mathbb{R}^d$ be a body such that
\begin{equation}\label{eq:tightness2} \mu(S) \geq (1-\eps) 
 \left( \prod_{g \in \G} \mu(\pi_g (S)) \right)^{1/m}.
\end{equation}
Then there exists a box $B \subset \mathbb{R}^d$ such that 
$$\mu(S \Delta B) \leq b \rho(\G) \eps\, \mu(S).$$
\end{corollary}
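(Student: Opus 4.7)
The plan is to deduce the corollary from Theorem \ref{thm:uniform-cover-stability} by the scaling-and-rounding approximation alluded to in the excerpt. Fix a large integer $N$ and let
\begin{equation*}
S_N := \{z \in \mathbb{Z}^d : z + [0,1)^d \subset NS\},
\end{equation*}
with continuous realization $\tilde S_N := \bigcup_{z\in S_N}(z + [0,1)^d) \subset NS$. For bodies with $\mu(\partial S)=0$ (to which the general case reduces by a routine inner/outer approximation), we have $|S_N|/N^d \to \mu(S)$ and $|\pi_g(S_N)|/N^{|g|} \to \mu(\pi_g(S))$ for every $g \in \mathcal{G}$, as $N \to \infty$.

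The decisive role of $\mathcal{G}$ being a uniform $m$-cover is that $\sum_{g\in\mathcal{G}}|g|=md$, so $(\prod_g N^{|g|})^{1/m} = N^d$, and both sides of (\ref{eq:UC}) scale identically under the dilation $S \mapsto NS$. Combining this with the limits above and the hypothesis (\ref{eq:tightness2}) yields, for all sufficiently large $N$,
\begin{equation*}
|S_N| \;\geq\; (1-\eps_N)\left(\prod_{g\in\mathcal{G}} |\pi_g(S_N)|\right)^{1/m}
\end{equation*}
with $\eps_N \to \eps$. Since $m(\mathcal{G})=m$ here and $\sigma(\mathcal{G}) > 0$ by assumption, Theorem \ref{thm:uniform-cover-stability} supplies a discrete box $B_N = A_1^{(N)}\times\cdots\times A_d^{(N)} \subset \mathbb{Z}^d$ with $|S_N \Delta B_N| \leq b(d)\rho(\mathcal{G})\eps_N |S_N|$. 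Inflating $B_N$ coordinate-wise into unit half-open intervals and rescaling by $1/N$ produces a continuous box $B^{(N)} \subset \mathbb{R}^d$ satisfying
\begin{equation*}
\mu(S \Delta B^{(N)}) \;\leq\; b(d)\rho(\mathcal{G})\eps\,\mu(S) + o(1).
\end{equation*}

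The main technical obstacle is eliminating the additive $o(1)$. A compactness argument accomplishes this: the boxes $B^{(N)}$ have measures bounded above by $(1+b(d)\rho(\mathcal{G})\eps+o(1))\mu(S)$, and, modulo negligible mass, lie inside any fixed bounded open set containing $\bar S$. Along a subsequence, each coordinate slice $A_i^{(N)}$ (truncated to a common bounded interval) converges in $L^1$ of its indicator to a measurable $A_i^* \subset \mathbb{R}$; the product $B := A_1^* \times \cdots \times A_d^*$ is then a box, and Fubini combined with lower semicontinuity of the symmetric-difference measure under $L^1$ convergence of the coordinate slices gives $\mu(S\Delta B) \leq b(d)\rho(\mathcal{G})\eps\,\mu(S)$, as required. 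A softer route, sidestepping compactness, is simply to absorb the $o(1)$ by replacing $b(d)$ with $b(d)+1$ and choosing $N$ large enough; this suffices because the corollary only asserts existence of \emph{some} constant $b=b(d)>0$.
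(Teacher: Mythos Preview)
Your approach is essentially the same as the paper's: discretize $S$ on a fine grid, use the key observation that for a uniform $m$-cover $\sum_{g\in\G}|g|=md$ so both sides of the inequality scale identically, apply Theorem~\ref{thm:uniform-cover-stability} to the discrete set, and pass to the limit. The paper presents this via a grid of mesh $\delta$ inside $S$ (with an auxiliary parameter $\eta$) rather than dilating by $N$, but the content is the same, and both arguments end with the same vague ``by a compactness argument'' to remove the additive error; your softer alternative of simply absorbing the $o(1)$ into the constant $b(d)$ is a perfectly valid way to close this and arguably cleaner than the compactness route (your stated $L^1$-subsequence argument for the coordinate slices is not quite right as written, since bounded sequences of indicators need not have $L^1$-convergent subsequences, but the alternative you give makes this moot).
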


 For completeness, we present in full the deduction of Corollary \ref{corr:uniform-cover-stability} from Theorem \ref{thm:uniform-cover-stability}, in Section \ref{Lemmas}.

If $\mathcal{G} = [d]^{(d-1)}$, then $m(\G) = d-1$, $\sigma(\G)=1$ and $\rho(\G) = d-1$, so the following stability result for the Loomis-Whitney inequality, is a special case of Theorem \ref{thm:uniform-cover-stability} .
\begin{cor}
\label{cor:lw-stability}
For every integer $d \ge 2$ there exists $c = c(d)>0$ such that the following holds. Let $S \subset \mathbb{Z}^d$ with $|S| < \infty$. If 
$$|S| \geq (1-\eps) \left( \prod_{i=1}^{d} |\pi_{[d]\setminus\{i\}} (S)| \right)^{1/(d-1)},$$
then there exists a box $B \subset \mathbb{Z}^d$ such that
$$|S \Delta B| \leq c \eps \,|S|.$$
\end{cor}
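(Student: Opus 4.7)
My plan is to read this off directly from Theorem~\ref{thm:uniform-cover-stability} by specialising to the family $\G := [d]^{(d-1)}$ of all $(d-1)$-element subsets of $[d]$. The only thing to do is to identify the three parameters $m(\G)$, $\sigma(\G)$, $\rho(\G)$ for this particular $\G$, and then substitute into the conclusion of the theorem.

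First I would compute $m(\G)$ and $\sigma(\G)$. The family $\G$ has exactly $d$ members, namely $[d]\setminus\{j\}$ for $j \in [d]$, and a coordinate $i \in [d]$ lies in $[d]\setminus\{j\}$ iff $j \neq i$. Hence every coordinate is covered by exactly $d-1$ members of $\G$, so $m(\G) = d-1$. For distinct $i,j \in [d]$, the set $[d]\setminus\{k\}$ contains $i$ but not $j$ iff $k = j$, giving a unique such set and hence $\sigma(\G) = 1$. Consequently $\rho(\G) = m(\G)/\sigma(\G) = d-1$.

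Next I would observe that the hypothesis of the corollary matches the hypothesis of Theorem~\ref{thm:uniform-cover-stability} for this $\G$ (with the same $\eps$), since $\prod_{g \in \G}|\pi_g(S)| = \prod_{i=1}^d |\pi_{[d]\setminus\{i\}}(S)|$ and the exponent $1/m(\G) = 1/(d-1)$ is exactly that appearing in the corollary. The theorem then provides a box $B \subset \Z^d$ with
$$|S \Delta B| \;\leq\; b(d)\,\rho(\G)\,\eps\,|S| \;=\; (d-1)\,b(d)\,\eps\,|S|,$$
so the corollary holds with $c(d) := (d-1)\,b(d)$. There is no genuine obstacle at this step: the real work lies entirely in proving Theorem~\ref{thm:uniform-cover-stability} itself, which (in keeping with the information-theoretic approach advertised in the introduction) will presumably proceed by showing that near-equality in the corresponding Shearer-type entropy bound forces the uniform distribution on $S$ to be close in an appropriate sense to a product distribution supported on a box.
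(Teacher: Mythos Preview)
Your proposal is correct and matches the paper's own derivation exactly: the paper likewise specialises Theorem~\ref{thm:uniform-cover-stability} to $\G = [d]^{(d-1)}$, computes $m(\G)=d-1$, $\sigma(\G)=1$, $\rho(\G)=d-1$, and takes $c(d) = (d-1)b(d)$ (indeed, it records $c(d) = (d-1)(4d^2+64d) \leq 36d^3$).
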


Of course, we can take $c(d) = (d-1)b(d) = (d-1)(4d^2+64d) \leq 36d^3$ in this corollary.

\paragraph*{Stability for a weighted version of the Uniform Cover inequality.}
In \cite{Friedgut04}, a weighted version of the Uniform Cover inequality is proved, a version that is, in fact, a special case of the Brascamp-Lieb inequality of \cite{brascamp-lieb}. It is not hard to verify that our proof in this paper goes through almost word for word to yield the following stability result for the weighted version.   
\begin{theorem}\label{thm:weighted}
For every integer $d \ge 2$ there exists $b = b(d) > 0$ such that the following holds. Let $\G \subset \mathcal{P}([d])$, and let $w: \G \rightarrow \mathbb{R}_{\geq 0}$ be a non-negative weight function on $\G$, such that every $i \in [d]$ is covered by sets with total weight at least 1, i.e.
$$
\sum_{g \in \G:\atop i \in g} w(g) \geq 1.
$$
Let $\sigma(\G):= \min_{i\not = j} \sum_{g \cap \{i,j\} = \{i\}} w(g),$ assume $\sigma(\G) >0$, and let $\rho(\G) = 1/\sigma(\G)$. Let $S \subset \mathbb{Z}^d$ with $|S| < \infty$.
If 
$$
|S| \geq (1-\eps)  \prod_{g \in \G}  |\pi_g (S)| ^{w(g)}
$$
then
there exists a box $B \subset \mathbb{Z}^d$ such that
$$
|S \Delta B| \leq b  \rho(\G) \eps \,|S|.
$$
\end{theorem}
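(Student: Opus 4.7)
The plan is to mimic the information-theoretic proof of Theorem \ref{thm:uniform-cover-stability} essentially verbatim, replacing Shearer's entropy lemma by its weighted generalisation (the special case of the Brascamp--Lieb inequality established in \cite{Friedgut04}). Let $X=(X_1,\ldots,X_d)$ be uniformly distributed on $S$, and write $H$ for Shannon entropy. The weighted Shearer inequality states that, under the coverage hypothesis $\sum_{g \ni i} w(g) \ge 1$ for every $i \in [d]$,
$$H(X) \;\leq\; \sum_{g \in \G} w(g) \, H(X_g),$$
while trivially $H(X_g) \leq \log|\pi_g(S)|$ for every $g \in \G$. Combining these recovers the weighted Uniform Cover inequality, and our stability hypothesis, after taking logarithms, reads $\sum_g w(g) \log|\pi_g(S)| - \log|S| \leq \delta$, where $\delta := -\log(1-\eps) \leq 2\eps$ (say, for $\eps \leq 1/2$).

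The left-hand side splits as a sum of two non-negative pieces,
$$\underbrace{\sum_{g\in \G} w(g)\bigl(\log|\pi_g(S)| - H(X_g)\bigr)}_{\text{non-uniformity of marginals}} \;+\; \underbrace{\sum_{g\in \G} w(g) H(X_g) - H(X)}_{\text{Shearer defect }D},$$
each of which is therefore at most $\delta$. Hence every marginal $X_g$ with $w(g)>0$ is close to uniform on $\pi_g(S)$ in KL-divergence, and the Shearer defect $D$ is at most $\delta$. Expanding $D$ by the chain rule in exactly the way used in the unweighted proof produces a non-negative combination of conditional mutual information terms; the contribution of each set $g$ with $i\in g$, $j\notin g$ carries weight $w(g)$, so by the weighted definition of $\sigma(\G)$ we obtain, for every pair $i \neq j$, a pairwise near-independence bound of the form
$$\sigma(\G) \cdot I(X_i; X_j \mid X_{A_{ij}}) \;\leq\; D \;\leq\; \delta, \qquad \text{hence} \qquad I(X_i; X_j \mid X_{A_{ij}}) \;\leq\; \rho(\G)\,\delta,$$
for a suitable conditioning set $A_{ij} \subseteq [d]\setminus\{i,j\}$ coming from the chain-rule ordering. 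These pairwise near-independence statements, together with the near-uniformity of each $X_g$, are fed into the structural part of the proof of Theorem \ref{thm:uniform-cover-stability}, yielding a box $B \subset \Z^d$ with $|S \Delta B| \leq b(d)\,\rho(\G)\,\eps\,|S|$ for a constant $b(d)$ of the same order as in the unweighted case.

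The main obstacle --- and the reason the authors describe the argument as going through \emph{almost} word for word --- is to verify that each point in the unweighted proof where one sums the indicator $\mathbf{1}_{g\ni i}$ (or $\mathbf{1}_{g\cap\{i,j\}=\{i\}}$) over $g \in \G$ can be replaced, without loss, by the weighted sum $\sum_{g \ni i} w(g)$ (respectively $\sum_{g\cap\{i,j\}=\{i\}} w(g)$). Since every entropy inequality used is linear in the measure placed on $\G$, this replacement is routine: convexity and Jensen-type steps remain valid for real non-negative weights, the normalisation $m=1$ is built into the coverage hypothesis, and the ratio $\rho(\G)=1/\sigma(\G)$ appears in place of $m(\G)/\sigma(\G)$. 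The final constant $b(d)$ can then simply be inherited from the unweighted proof, with no re-optimisation required.
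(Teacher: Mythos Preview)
Your proposal is correct and follows essentially the same route as the paper, which simply asserts that the proof of Theorem~\ref{thm:uniform-cover-stability} ``goes through almost word for word'' and omits the details. Two small points of precision: the paper's Lemma~\ref{lem:TightToInfo} bounds the full mutual information $I(X_i;X_{[d]\setminus\{i\}})$ directly (not just the pairwise conditional terms $I(X_i;X_j\mid X_{A_{ij}})$, though these sum to the former via the chain rule), and the structural part of the argument (Claim~\ref{clm:dHvsdD}, Lemma~\ref{lem:FindBox1}, and the hole-weight computation) uses only this mutual-information bound --- the separate ``near-uniformity of each $X_g$'' observation, while true, is not actually fed into the structural step.
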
 
Again, we can take $b(d) = 4d^2+64d$.
We omit the details of the proof of Theorem \ref{thm:weighted}.

\paragraph*{About the proof.} 

A few words regarding our proof of Theorem~\ref{thm:uniform-cover-stability}. 
To prove a stability result for some inequality, it is natural to 
consider a certain proof of that inequality, and
to `work backwards' through this proof to deduce closeness to the desired structure when equality almost holds. This may be called a `stable version' of the relevant proof. Perhaps the first natural approach to proving Theorem \ref{thm:uniform-cover-stability} is to produce a stable version of the classical proof of \eqref{eq:UC-finite-extended}, which uses H\"{o}lder's inequality and induction on the dimension. However, even for the Loomis-Whitney inequality, this only yields $\Theta(\sqrt{\epsilon})$-dependence. In order to obtain the sharp $\Theta(\epsilon)$-dependence we seek, we consider the beautiful
Llewellyn-Radhakrishnan proof of Shearer's Lemma (see
\cite{radhakrishnan}).
This proof is information-theoretic, using simple properties of entropy.

Given a set $S \subset \Z^d$ and a family $\G \subset \mathcal{P}([d])$, such that the inequality (\ref{eq:UC-finite-extended}) is almost tight for $S$ and $\G$, we first deduce that, in a sense, $S$ is `close' to a `product structure';
that is, the uniform distribution on $S$ is `close' to the product of its marginals (the relevant notion of distance is discussed below). That part of the argument is 
a fairly straightforward deduction from the entropy proof of Llewellyn-Radhakrishnan, or alternatively from the entropy inequalities proved by Balister and Bollob\'as in \cite{ballister-bollobas}. (It is also inspired by the proof of the Parallel
Repetition Theorem in \cite{Raz98}, and related works such as
\cite{BarakBCR10}.)

Although information theory allows us to
easily identify a product structure,
it moves us from the language of sets
to the language of distributions.
The next ingredient of the proof is more combinatorial.
Its purpose is to move back from the language of distributions
to the language of sets; we identify the actual box that we claim exists.

It turns out that for this part of the proof, a `two-dimensional lemma' suffices.
Given a set $S \subset X_1 \times X_2$, such that the uniform distribution on $S$ is `close' to the product of its marginals, we find a two-dimensional box (a `rectangle') $R_1 \times R_2$, which is a good approximation of the set $S$.
To prove this lemma, we need to identify the two sets $R_1 \subset \pi_1(S)$
and $R_2 \subset \pi_2(S)$.
This is done via an iterative `trimming' procedure,
which gradually removes parts of $\pi_1(S)$ and $\pi_2(S)$,
until only $R_1$ and $R_2$ remain.
The crux of the proof is in showing that we did not throw too much,
i.e., that $S \setminus (R_1 \times R_2)$ is small.

We apply this two-dimensional lemma $d$ times; for each $i \in [d]$ we consider our set $S \subset \Z^d$ as a two-dimensional set $S \subset \Z^{\{i\}} \times \Z^{[d] \setminus \{i\}}$, and we find a set $R_i \subset \Z^{\{i\}}$ which is a good candidate to be the `edge in direction $i$' of the box approximating $S$. We then check that, indeed, $S$ is close to the box
$R_1 \times R_2 \times \ldots \times R_d$.

The last issue to discuss is the meaning of the word `close' above:
how to measure the `distance'
between the uniform distribution on $S$ and the product of 
its marginals.
The information theoretic part of the argument
naturally leads to measuring this distance 
using the Kullback-Leibler divergence.
A natural thing to do at this point would be to use Pinsker's
inequality to move from Kullback-Leibler divergence to $\ell^1$-distance. 
This, however, leads to suboptimal $\Theta(\sqrt{\eps})$-dependence.
To overcome this difficulty, we introduce a new (but natural) measure
of distance, which we call the `hole-weight'.
The hole-weight suffices to control the trimming procedure,
yielding optimal $\Theta(\eps)$-dependence.
It may find applications in other, similar scenarios. For more details on this part of the proof,
see Section~\ref{Lemmas} below.

\subsection{Isoperimetric inequalities}
In Section \ref{sec:edgeIso}, we apply Theorem
\ref{thm:uniform-cover-stability} to prove another result, demonstrating
stability for the edge-isoperimetric inequality in the infinite
$d$-dimensional lattice. Before stating this formally, we give some background
on isoperimetric inequalities.

Isoperimetric problems are classical objects of study in
mathematics. In general, they ask for the minimum possible `boundary size'
of a set of a given `size', where the exact meaning of these words
varies according to the problem. 

The classical isoperimetric
problem in the plane asks for the minimum possible perimeter of a
shape in the plane with area 1. The answer, that it is best to take a
circle, was already known to the ancient Greeks, but it was not until the
19th century that this was proved rigorously\footnote{The first complete
proof, by placing the calculus of variations on a fully rigorous
footing, was given by Weierstrass in a series of lectures in the
1870s in Berlin.}.

The isoperimetric inequality for Euclidean space states that among all
subsets of $\mathbb{R}^d$ of given volume, Euclidean balls have the
smallest boundary. To state a version of this precisely, if
$A \subset \mathbb{R}^d$ is a Borel set of finite Lebesgue measure, we
denote by $\Per(A)$ the {\em distributional perimeter} of $A$ (see
e.g. Chapter 12 in \cite{maggi2012sets} for the definition of
distributional perimeter).
When $A$ has piecewise smooth topological boundary $\partial A$, then
$\Per(A) = \mu_{d-1}(\partial A)$,
where $\mu_{d-1}(\partial A)$ denotes the $(d-1)$-dimensional Hausdorff measure of $\partial A$, a measure of the boundary which may be more familiar to some readers.

\begin{theorem}
\label{thm:iso-Rd}
If $S \subset \mathbb{R}^d$ is a Borel set with Lebesgue measure $\mu(S) < \infty$, then
$$\Per(S) \geq \Per(B),$$
where $B$ is an Euclidean ball in $\mathbb{R}^d$ with $\mu(B)=\mu(S)$.
\end{theorem}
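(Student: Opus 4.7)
The plan is to derive Theorem~\ref{thm:iso-Rd} from the Brunn--Minkowski inequality, which states that for non-empty Borel sets $A, B \subset \mathbb{R}^d$,
$$\mu(A+B)^{1/d} \geq \mu(A)^{1/d} + \mu(B)^{1/d}.$$
Let $B_1 \subset \mathbb{R}^d$ denote the closed Euclidean unit ball, and write $\omega_d := \mu(B_1)$. The case $\Per(S) = +\infty$ is trivial, so assume $S$ has finite perimeter.

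First I would prove the inequality when $S$ has smooth, compact boundary. For such $S$, standard geometric measure theory gives
$$\Per(S) \;=\; \mathcal{H}^{d-1}(\partial S) \;=\; \lim_{\varepsilon \to 0^+} \frac{\mu(S+\varepsilon B_1) - \mu(S)}{\varepsilon}.$$
Applying Brunn--Minkowski to $S$ and $\varepsilon B_1$, then raising both sides to the $d$-th power and expanding, yields
$$\mu(S+\varepsilon B_1) \;\geq\; \mu(S) + d\, \omega_d^{1/d}\, \mu(S)^{(d-1)/d}\, \varepsilon + O(\varepsilon^2).$$
Dividing by $\varepsilon$ and letting $\varepsilon \to 0^+$ gives $\Per(S) \geq d\, \omega_d^{1/d}\, \mu(S)^{(d-1)/d}$. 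A direct computation shows that equality holds when $S$ is a Euclidean ball $B$, so $\Per(S) \geq \Per(B)$ whenever $\mu(S) = \mu(B)$.

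For general Borel $S$ with finite distributional perimeter, I would pass to the limit: any such $S$ admits an approximating sequence of sets $S_n$ with smooth boundaries satisfying $S_n \to S$ in $L^1$, $\mu(S_n) \to \mu(S)$, and $\Per(S_n) \to \Per(S)$ (the standard strict-BV approximation via mollification of $\mathbf{1}_S$). Applying the smooth case to each $S_n$ and taking the limit yields the desired inequality. The main obstacle is the approximation step: ensuring that the total variation of the mollifications converges to $\Per(S)$ (and not merely that $\mathbf{1}_{S_n} \to \mathbf{1}_S$ in $L^1$) is the delicate point, but is classical. An alternative route that bypasses Brunn--Minkowski altogether is Steiner symmetrization along a dense sequence of directions, combined with the $L^1$-lower-semicontinuity of the perimeter functional to transfer the bound from the limiting ball back to $S$.
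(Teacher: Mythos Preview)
Your argument via Brunn--Minkowski plus strict-BV approximation is the standard textbook route and is essentially correct as sketched; the one point to be careful about is that the outer Minkowski content formula $\Per(S)=\lim_{\varepsilon\to 0^+}(\mu(S+\varepsilon B_1)-\mu(S))/\varepsilon$ requires some regularity, but you acknowledge this and restrict to smooth $S$ there, then pass to the limit.

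However, there is nothing to compare with: the paper does not prove Theorem~\ref{thm:iso-Rd}. It is quoted purely as background (the classical Euclidean isoperimetric inequality) to motivate the discrete edge-isoperimetric problem, and the reader is implicitly referred to the geometric-measure-theory literature (e.g.\ Maggi's book \cite{maggi2012sets}) for the definition of distributional perimeter and a proof. The paper's own work begins with the discrete analogue, Theorem~\ref{thm:edge-iso}, which it \emph{does} prove via Loomis--Whitney and AM--GM.
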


In this paper, we consider a discrete analogue of 
Theorem~\ref{thm:iso-Rd}. To state it, we
need some more notation. Let $\mathbb{L}^d$ denote the graph of the
$d$-dimensional integer lattice, i.e.\ the graph with vertex-set $\mathbb{Z}^d$
and edge-set
$$\{\{x,x+e_i\}:\ x \in \mathbb{Z}^d,\ i \in [d]\},$$
where $e_i = (0,0,\ldots,0,1,0,\ldots,0)$ denotes the $i$th unit
vector in $\mathbb{R}^d$. If $S \subset \mathbb{Z}^d$, we let
$\partial S$ denote the {\em edge-boundary} of $S$ in the graph
$\mathbb{L}^d$, meaning the set of edges of $\mathbb{L}^d$ which join
a vertex in $S$ to a vertex not in $S$. 

The following
edge-isoperimetric inequality in $\mathbb{L}^d$ is an easy consequence
of the Loomis-Whitney Inequality, and the inequality of arithmetic and geometric means (the AM-GM inequality, for short).
It is also an immediate consequence of Theorem 8 in~\cite{bollobas-leader-grid}.

\begin{theorem}
\label{thm:edge-iso}
Let $S \subset \mathbb{Z}^d$ with $|S| < \infty$. Then
$$|\partial S| \geq 2d|S|^{(d-1)/d}.$$
\end{theorem}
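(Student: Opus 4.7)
The plan is to deduce the inequality from the Loomis--Whitney inequality (the case $\mathcal{G} = [d]^{(d-1)}$ of (\ref{eq:UC-finite})) combined with the AM--GM inequality, as hinted in the excerpt. The key observation is that for each direction $i \in [d]$, the edges of $\partial S$ oriented along $e_i$ can be lower bounded in terms of the size of the projection $\pi_{[d]\setminus\{i\}}(S)$.

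First I would fix $i \in [d]$ and, for each $p \in \pi_{[d]\setminus\{i\}}(S)$, consider the ``fiber'' $F_p := \{k \in \mathbb{Z} : p + k e_i \in S\}$, where I slightly abuse notation by identifying $p$ with its natural embedding into $\mathbb{Z}^d$ having $0$ in coordinate $i$. Since $S$ is finite, $F_p$ is a non-empty finite subset of $\mathbb{Z}$, and hence contains a minimum element and a maximum element. The two lattice edges joining $p + (\min F_p)\, e_i$ to $p + (\min F_p - 1)\, e_i$, and $p + (\max F_p)\, e_i$ to $p + (\max F_p + 1)\, e_i$, both lie in $\partial S$ (and are distinct because $\min F_p \neq \max F_p + 1$). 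This contributes at least $2$ boundary edges in direction $i$ per element of $\pi_{[d]\setminus\{i\}}(S)$, and these are distinct across different fibers. Hence
\[
|\partial S| \;=\; \sum_{i=1}^d |\partial_i S| \;\geq\; 2 \sum_{i=1}^d |\pi_{[d]\setminus\{i\}}(S)|,
\]
where $\partial_i S$ denotes the edges of $\partial S$ in direction $i$.

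Next I would apply AM--GM to the $d$ projection sizes, obtaining
\[
\sum_{i=1}^d |\pi_{[d]\setminus\{i\}}(S)| \;\geq\; d \left(\prod_{i=1}^d |\pi_{[d]\setminus\{i\}}(S)|\right)^{1/d}.
\]
Finally, the Loomis--Whitney inequality (\ref{eq:UC-finite}) applied with $\mathcal{G} = [d]^{(d-1)}$, which is a uniform $(d-1)$-cover, gives $\prod_{i=1}^d |\pi_{[d]\setminus\{i\}}(S)| \geq |S|^{d-1}$, so the right-hand side is at least $d \, |S|^{(d-1)/d}$. Combining yields $|\partial S| \geq 2d\, |S|^{(d-1)/d}$.

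There is no real obstacle here: the only step that requires a moment of care is the edge-counting claim, where one must verify that the boundary edges identified for distinct fibers are genuinely distinct (which is immediate, since they project to distinct points of $\mathbb{Z}^{[d]\setminus\{i\}}$) and that exactly two edges per fiber is the correct lower bound (each connected run of lattice points in a fiber contributes two ``endpoint'' boundary edges in direction $i$, and there is at least one such run).
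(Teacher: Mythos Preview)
Your proof is correct and is essentially identical to the paper's: both bound $|\partial_i S| \geq 2|\pi_{[d]\setminus\{i\}}(S)|$ via the fibre argument, sum over $i$, and then apply AM--GM followed by Loomis--Whitney. The only difference is cosmetic --- you spell out the min/max endpoints of each fibre explicitly, whereas the paper states the fibre bound in one line.
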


Equality holds in Theorem~\ref{thm:edge-iso} if $S = [a]^d$ for some $a \in \mathbb{N}$. Very slightly more generally, equality holds if $S = S_1 \times S_2 \times \ldots \times S_d$, where $S_1,\ldots,S_d$ are equal-sized intervals in $\mathbb{Z}$; we will call such a set a {\em cube}.

\subsubsection*{Isoperimetric stability}

In their seminal work \cite{fusco2008sharp},
Fusco, Maggi and Pratelli proved that if $S \subset \mathbb{R}^d$ is a
Borel set of finite measure and with distributional perimeter close to
the minimum possible size (viz, the size given by Theorem
\ref{thm:iso-Rd}), then $S$ must be close in symmetric difference to an
Euclidean ball of the same measure, confirming a conjecture of Hall.
\begin{theorem}[Fusco, Maggi, Pratelli]
\label{thm:fmp}
Suppose $S \subset \mathbb{R}^d$ is a Borel set with Lebesgue measure $\mu(S) < \infty$, and with
$$\Per(S) \leq (1+\epsilon)\Per(B),$$
where $B$ is a Euclidean ball with $\mu(B)=\mu(S)$. Then there exists $x \in \mathbb{R}^d$ such that
$$\mu(S \Delta (B + x)) \leq C_d\, \sqrt{\epsilon}\, \mu(S),$$
where $C_d >0$ is a constant depending upon $d$ alone.
\end{theorem}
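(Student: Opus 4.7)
The plan is to attack this via Steiner symmetrization, quantifying at each symmetrization step how much perimeter must drop in terms of the asymmetry of $S$ relative to a translate of the ball.

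First, for a unit vector $e \in \mathbb{S}^{d-1}$, let $S^e$ denote the Steiner symmetral of $S$ with respect to the hyperplane $e^{\perp}$. Steiner symmetrization preserves Lebesgue measure, and the classical Steiner inequality gives $\Per(S^e) \le \Per(S)$. The first main step is to prove a quantitative form of this inequality---a \emph{defect Steiner estimate}---of the shape
\begin{equation*}
\mu\bigl(S\,\Delta\,(S^e + t\,e)\bigr)^{2} \;\le\; C_d\,\bigl(\Per(S) - \Per(S^e)\bigr)\,\mu(S)
\end{equation*}
for a suitable translation parameter $t \in \mathbb{R}$. This square-root-in-disguise estimate is what will ultimately produce the $\sqrt{\epsilon}$ exponent.

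Second, I would iterate: choose a finite sequence of directions $e_1,\ldots,e_N$ (with $N$ depending only on $d$) such that, after the corresponding chain of Steiner symmetrizations, the output set is forced by its symmetries to be $L^{1}$-close to a ball. Since each symmetrization is volume-preserving and perimeter-nonincreasing, the \emph{total} perimeter drop along the chain is at most $\epsilon\,\Per(B)$. Summing the squared defect bounds along the chain, applying Cauchy--Schwarz, and absorbing the volume factor gives
\begin{equation*}
\mu\bigl(S\,\Delta\,(B+x)\bigr) \;\le\; C_d\,\sqrt{\epsilon}\,\mu(S)
\end{equation*}
for an appropriate translate $B+x$, as claimed.

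The main obstacle is the defect Steiner estimate itself. The natural route is a fibre-wise analysis: on each one-dimensional slice $\{y + s e : s \in \mathbb{R}\} \cap S$, a Bonnesen-type inequality gives sharp square-root control of the one-dimensional asymmetry by the slice's own perimeter defect; a Cauchy--Schwarz across slices then upgrades this to the desired global estimate. A second delicate point is making sure the translation vectors picked up at each symmetrization step can be coherently aggregated into the single vector $x$ in the conclusion---this uses that the intermediate sets are already approximately symmetric, so their barycenters do not drift much. An alternative route, which avoids some of these bookkeeping issues, is the optimal-transport approach: let $T = \nabla \varphi$ be the Brenier map from $S$ onto a ball $B$ of equal volume, and use the arithmetic--geometric mean inequality on the eigenvalues of the Jacobian of $T$ to argue that the perimeter excess controls $\|T - \mathrm{Id}\|_{L^{2}}^{2}$, which in turn controls $\mu(S \,\Delta\, (B+x))$ after the right choice of translate---again with the sharp $\sqrt{\epsilon}$ exponent.
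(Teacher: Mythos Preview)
The paper does not prove Theorem~\ref{thm:fmp}. It is stated as a known result of Fusco, Maggi and Pratelli \cite{fusco2008sharp}, quoted only as background and motivation for the paper's own discrete analogue (Theorem~\ref{thm:edge-iso-stability}). So there is no proof in the paper for your proposal to be compared against.

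As for the proposal itself: it is a plausible high-level outline rather than a proof, and it gestures at the right ingredients (quantitative Steiner symmetrization is indeed the engine in \cite{fusco2008sharp}, and the optimal-transport alternative you mention is the Figalli--Maggi--Pratelli route \cite{figalli2010mass}). But several steps are not justified as written. The displayed ``defect Steiner estimate'' is stated without proof and is in fact the hard part of \cite{fusco2008sharp}; the slice-by-slice Bonnesen argument you sketch does not straightforwardly yield it, because the fibrewise asymmetries need not add coherently and the translation parameter $t$ must be chosen globally, not fibre by fibre. More seriously, the claim that a \emph{finite} number $N = N(d)$ of Steiner symmetrizations forces the output to be $L^1$-close to a ball is false in general: convergence of iterated Steiner symmetrizations to a ball typically requires infinitely many steps, and controlling how close one is after finitely many steps is itself a delicate issue. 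The actual argument in \cite{fusco2008sharp} handles this by first reducing to $d$-symmetric sets and then running a separate, rather intricate analysis on that restricted class. Your sketch does not address this reduction, and without it the iteration step does not close.
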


As observed in \cite{fusco2008sharp}, Theorem \ref{thm:fmp} is sharp up to the value of the constant $C_d$, as can be seen by taking $S$ to be an ellipsoid with $d-1$ semi-axes of length 1 and one semi-axis of length slightly larger than 1.

In this paper, we prove a discrete analogue of Theorem \ref{thm:fmp}
by using Theorem \ref{thm:uniform-cover-stability}. 
We prove the following stability result for the edge-isoperimetric inequality in $\mathbb{L}^d$.

\begin{theorem}
\label{thm:edge-iso-stability}
Let $d \in \mathbb{N}$ with $d \geq 2$. If $S \subset \mathbb{Z}^d$ with $|S| < \infty$ and with
$$|\partial S| \leq (1+\epsilon) 2d|S|^{(d-1)/d},$$
then there exists a cube $C \subset \mathbb{Z}^d$ such that
$$|S \Delta C| \leq 72 d^{5/2} \sqrt{\epsilon} |S|.$$
\end{theorem}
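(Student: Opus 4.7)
The plan is to combine the Loomis-Whitney stability (Corollary~\ref{cor:lw-stability}) with second-order stability for AM-GM, linked by an elementary fibre-counting lower bound on the edge-boundary. First I would observe that for each $i\in[d]$, every non-empty fibre of $S$ in direction $i$ contributes at least two edges to $\partial S$ (one at each ``end''), so
\[
|\partial S| \;\ge\; 2\sum_{i=1}^{d}|\pi_{[d]\setminus\{i\}}(S)|.
\]
Applying AM-GM and then Loomis-Whitney to the right-hand side reproduces the bound $|\partial S|\ge 2d|S|^{(d-1)/d}$; under the hypothesis each of these three inequalities is tight to within a multiplicative factor of $1+\epsilon$. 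In particular Loomis-Whitney is $O(\epsilon)$-tight, so Corollary~\ref{cor:lw-stability} yields a box $B$ with side lengths $b_1,\ldots,b_d$ satisfying $|S\Delta B|\le c(d)\,\epsilon\,|S|$ with $c(d)=O(d^3)$.

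The decisive step is to promote $B$ to a cube, using the near-tightness of AM-GM. Writing $a_i=|\pi_{[d]\setminus\{i\}}(S)|$ and $g=(\prod_i a_i)^{1/d}$, the hypothesis gives $\tfrac{1}{d}\sum_i a_i\le(1+\epsilon)g$ while $\sum_i\log(a_i/g)=0$. These two relations first force $a_i/g\in[1/e,\,d]$ (a short AM-GM argument applied to the complement of each coordinate), and on this range the sharpened inequality $x-1-\log x\ge (x-1)^2/(2x)$ combines with the identity $\sum_i[(a_i/g-1)-\log(a_i/g)]=d\epsilon$ to give $\sum_i(a_i/g-1)^2\le O(d^2\epsilon)$, hence $\max_i|a_i/g-1|\le O(d\sqrt{\epsilon})$. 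Since $a_i$ differs from $\prod_{j\ne i}b_j=|\pi_{[d]\setminus\{i\}}(B)|$ by at most $|S\Delta B|$ --- a lower-order term --- the same approximate equality transfers to $\prod_{j\ne i}b_j$; taking ratios yields $\max_j b_j/\min_j b_j\le 1+O(d\sqrt{\epsilon})$.

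Finally I would take $C$ to be an integer cube of side length nearest to $b^{\ast}:=(\prod_j b_j)^{1/d}$, positioned to overlap $B$ as much as possible. Then $B\Delta C$ is contained in $d$ axis-parallel slabs of thickness $O(d\sqrt{\epsilon})\,b^{\ast}$ each, so $|B\Delta C|\le O(d^2\sqrt{\epsilon})|S|$, and hence
\[
|S\Delta C|\;\le\;|S\Delta B|+|B\Delta C|\;\le\;O\!\left(d^3\epsilon+d^2\sqrt{\epsilon}\right)|S|.
\]
A careful bookkeeping of constants yields the claimed $72d^{5/2}\sqrt{\epsilon}\,|S|$ bound (in the trivial regime $72d^{5/2}\sqrt{\epsilon}\ge 1$ one may simply take $C=\emptyset$). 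The main obstacle here is conceptual rather than technical: because equality in AM-GM is only second-order, the $\sqrt{\epsilon}$-rate cannot be improved even though our Loomis-Whitney stability is $\epsilon$-sharp; this matches the sharp rate in the continuous Fusco-Maggi-Pratelli theorem (Theorem~\ref{thm:fmp}), where a slightly elongated ellipsoid provides the tight example.
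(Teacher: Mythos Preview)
Your outline follows the paper's strategy closely up to the point where you have a box $B=B_1\times\cdots\times B_d$ with $|S\Delta B|=O(d^3\epsilon)|S|$ and all $|B_i|$ approximately equal. But the final step --- replacing $B$ by a cube --- has a genuine gap. In this paper a \emph{box} in $\mathbb{Z}^d$ is a Cartesian product of \emph{arbitrary} finite subsets of $\mathbb{Z}$ (see the definition just before Theorem~\ref{thm:uniform-cover-stability}), whereas a \emph{cube} is a product of equal-length \emph{intervals}. Corollary~\ref{cor:lw-stability} gives no control whatsoever over the metric spread of the factors $B_i$; all it sees are cardinalities of projections. So the assertion that ``$B\Delta C$ is contained in $d$ axis-parallel slabs of thickness $O(d\sqrt{\epsilon})b^\ast$'' is unjustified: if some $B_i$ were, say, a union of two well-separated intervals, no cube is close to $B$ even though all $|B_i|$ agree. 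You have not used the edge-boundary hypothesis in any way that is sensitive to the lattice metric --- only through the cardinality chain $|\partial S|\ge 2\sum_i|\pi_{[d]\setminus\{i\}}(S)|\ge\cdots$ --- so nothing in your argument rules this out.

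This is exactly the difficulty the paper addresses with the ``heavy/light'' trimming and Claim~\ref{claim:close-to-interval1}: after discarding the few $c\in R_i$ whose slice $\{x\in R:x_i=c\}$ meets $S$ in less than $7/8$ of its volume, one shows that if the surviving set $R_i'$ were spread over an interval noticeably longer than $|R_i|$, then counting direction-$i$ boundary edges between heavy slices would force $|\partial_i S|$ to exceed $(2+\tfrac14)|S|^{(d-1)/d}$, contradicting the hypothesis. This is the one place where the actual geometry of $\mathbb{L}^d$ (rather than projection sizes) enters, and it is missing from your outline.

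There is also a smaller issue in the transfer step: you write that ``$a_i$ differs from $\prod_{j\ne i}b_j$ by at most $|S\Delta B|$ --- a lower-order term'', but $|S\Delta B|=O(\epsilon|S|)$ while $a_i=\Theta(|S|^{(d-1)/d})$, so the implied relative error is $O(\epsilon|S|^{1/d})$, which need not be small. The paper avoids this by bounding $|R_i|$ directly via $|R_i|=|R|/\prod_{j\ne i}|R_j|$ and a fibre argument (Claim~\ref{claim:approx-size1}); your version can be repaired along the same lines, but not as written.
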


Theorem \ref{thm:edge-iso-stability} has the best possible dependence
on $\epsilon$, as can be seen by taking a `cuboid'
$S = [a]^{d-1} \times [b]$, where $b$ is slightly larger than $a$ (see
Remark \ref{remark:iso-sharpness} for details). We conjecture that the dependence on $d$ could be improved, to $\Theta(\sqrt{d})$ (see section \ref{sec:conclusion}).

\subsection{Structure of paper}

In section \ref{sec:prelim}, we cover some background and introduce
some notation.  In subsection \ref{Lemmas}, we present our main lemmas and prove that they imply our main theorem.
In subsection \ref{ProofsOfLemmas}, we prove the main lemmas. In section
\ref{sec:edgeIso}, we prove Theorem \ref{thm:edge-iso-stability}, our
stability result for the edge-isoperimetric inequality in the lattice
$\mathbb{L}^d$. Finally, in section \ref{sec:conclusion}, we conclude with some
open problems.

\section{Preliminaries} \label{sec:prelim}
In this section, we state some definitions and known results from
probability theory and information theory, and we describe some of our
notation. For background concerning the information-theoretic
results, and for proofs, the reader is referred to
\cite{cover2012elements}.

Throughout this paper, $\log$ means $\log_2$, and we use the convention $0 \log 0 = 0$.

\begin{defn}
Let $p$ be a probability distribution on a finite or countable set $X$. The {\em entropy} of $p$ is defined by
$$H(p) := \sum_{x \in X} p(x) \log (1/p(x)).$$
\end{defn}
The entropy of a random variable is the entropy of its distribution. (By a slight abuse of terminology, we will often identify a random variable with its distribution.)

Intuitively, the entropy of a random variable 
measures the `amount of information' one has from knowing the value of the random variable.

Let $\supp(p)$ denote the support of the distribution $p$, i.e. $\supp(p) = \{x:\ p(x) \neq 0\}$. The convexity of $t \mapsto \log(1/t)$ implies that
\begin{equation}\label{eq:uniform-bound} H(p) \leq \log |\supp(p)|.\end{equation}

Note that equality holds in (\ref{eq:uniform-bound}) iff $p$ is uniformly distributed on its support.
\begin{defn}
For two random variables $A,B$ taking values in a set $X$, the {\em conditional entropy of $A$ given $B$} is defined by
$$H(A|B) := H(A,B) - H(B).$$
\end{defn}

The {\em chain rule} for entropy follows immediately:
$$H(A,B) = H(B) + H(A|B).$$
It is easy to prove that {\em conditioning does not increase entropy}: for any two random variables $A,B$,
$$H(A|B) \leq H(A).$$

For three random variables $A,B,C$,
we denote by $p(a,b,c)$ the probability of the event $\{A=a,B=b,C=c\}$, we denote by $p(a)$ the probability of the event $\{A=a\}$, and if $p(b)>0$, we denote by $p(a|b)$ the probability of $\{A=a\}$ given
$\{B=b\}$.

\begin{defn}
If $p$ and $q$ are two distributions on a finite or countable set $X$, with $\supp(p) \subset \supp(q)$, the {\em Kullback-Leibler divergence} between $p,q$ is defined by
\[D(p||q) := \sum_x p(x) \log \frac{p(x)}{q(x)}.\]
If $\supp(p) \not\subset \supp(q)$, we define $D(p||q) := \infty$.
\end{defn}

The Kullback-Leibler divergence is non-negative, it is zero if and only if $p=q$, but it is not symmetric, i.e. in general, $D(q||p) \neq D(p||q)$, even when $\supp(p)=\supp(q)$.

\begin{defn}
The {\em mutual information of $A$ and $B$} is defined by
\begin{align}
I(A;B) = H(A) - H(A|B)
= \sum_{a,b} p(a,b) \log \frac{p(a,b) }{p(a)p(b)}.
\end{align}
Note that this is also the Kullback-Leibler divergence between the joint distribution of $A$ and $B$ and the product of the marginals.

The {\em mutual information of $A$ and $B$, conditioned on $C$}, is defined by
\begin{align}
\label{eqn:info}
I(A;B\,|\,C) = H(A|C) - H(A\,|\,(B,C))
= \sum_{a,b,c} p(a,b,c) \log \frac{p(a,b|c) }{p(a|c)p(b|c)}.
\end{align}
\end{defn}
Mutual information {\em is} symmetric under interchanging $A$ and $B$, i.e.
$$I(B;A\,|\,C) = I(A;B\,|\,C).$$
Another (intuitively plausible) property of mutual information is that if $C$ is a function of $B$,
then 
$$
I(A;C) \leq I(A;B).
$$
We refer to this property as the `monotonicity of mutual information'.
\paragraph{Marginal distributions.}
Let $p$ be a probability distribution on $\Z^d$. For a subset $g \subset [d]$, we denote by $p_g$ the marginal distribution of $p$
on the set of coordinates $g$, i.e.
$$\forall S \subset \mathbb{Z}^g, \qquad p_g(S) = p(S \times \mathbb{Z}^{[d] \setminus g}).$$
If $(g_1,g_2,\ldots,g_r)$ is a partition of $[d]$,
we denote by $p_{g_1}  p_{g_2}  \ldots p_{g_r}$ 
the obvious product-distribution on $\Z^d$.

We will need the following equation relating the divergences between various products of marginals of $p$.
\begin{equation}\label{telescope}
D\left(p\mid \mid \prod_{i=1}^d p_i\right)= \sum_{i=2}^{d} D\left(p_{[i]} \mid \mid p_{[i-1]}p_i\right).
\end{equation}
This is easily verified, using the definition of $D$ and expanding the logarithms on the right-hand side. Note that if $X = (X_1,X_2,\ldots,X_d)$ is a random variable with probability distribution $p$, then the left-hand side is precisely the {\em total correlation} of the set of random variables $\{X_1,\ldots,X_d\}$.

\section{Proof of Theorem \ref{thm:uniform-cover-stability}}
\subsection{Main lemmas and the deduction of Theorem \ref{thm:uniform-cover-stability}}\label{Lemmas}
In this subsection, we present several statements that, when put together, easily imply our main theorem.

First, we would like to show that if the Uniform Cover inequality is close to being tight for a set $S$, then there is not much mutual information between any 1-dimensional marginal of the uniform distribution on $S$, and the complementary $(d-1)$-dimensional marginal.
\begin{lemma}
\label{lem:TightToInfo}
Let $d \in \mathbb{N}$ with $d \geq 2$. Let $\G \subset \mathcal{P}([d])$ with $m(\G),\sigma(\G)>0$. Let $0 \leq \eps \leq \tfrac{1}{2}$.
Let $S \subset \Z^d$ with $|S| < \infty$ and with
$$|S| \geq (1-\eps) \left( \prod_{g \in \G} |\pi_g (S)| \right)^{1/m(\G)}.$$
Let $p$ denote the uniform distribution on $S$. Then for all $i \in [d]$, we have
$$
 I(p_{\{i\}};p_{[d] \setminus \{i\}}) \leq 2 \rho(\G) \eps .
$$
\end{lemma}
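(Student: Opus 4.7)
My plan is to translate the hypothesis into an entropy-slack bound, and then to show that the slack in Shearer's lemma is large enough to contain $\sigma(\G)\cdot I(p_{\{i\}};p_{[d]\setminus\{i\}})$. Let $X\sim p$ be uniform on $S$, so that $H(X)=\log|S|$ and $H(X_g)\le\log|\pi_g(S)|$. Taking $\log_2$ of the hypothesis raised to the $m(\G)$-th power, and using $-\log_2(1-\epsilon)\le 2\epsilon$ for $\epsilon\in[0,\tfrac12]$, immediately gives
\[
\sum_{g\in\G} H(X_g)\;\le\; m(\G)\,H(X)+2m(\G)\epsilon.
\]

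For the matching lower bound I split $\G=\G_i\cup\G_{\bar i}$ according to whether $i\in g$ and expand $H(X_g)=H(X_i)+H(X_{g\setminus\{i\}}|X_i)$ for each $g\in\G_i$. The multiset $\{g\setminus\{i\}:g\in\G_i\}\cup\G_{\bar i}$ is an $m(\G)$-cover of $[d]\setminus\{i\}$, since every $j\ne i$ retains its full multiplicity from $\G$; so Shearer's lemma applied to the conditional distribution of $X_{[d]\setminus\{i\}}$ given $X_i$ yields
\[
m(\G)\,H(X_{[d]\setminus\{i\}}|X_i)\;\le\;\sum_{g\in\G_i}H(X_{g\setminus\{i\}}|X_i)+\sum_{g\in\G_{\bar i}}H(X_g|X_i).
\]
Substituting this back into the decomposition of $\sum_g H(X_g)$ and using $|\G_i|\ge m(\G)$, I obtain
\[
\sum_{g\in\G}H(X_g)\;\ge\;m(\G)H(X)+\sum_{g\in\G_{\bar i}}I(X_g;X_i).
\]

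The crucial observation is that $\G_{\bar i}$ is itself a $\sigma(\G)$-cover of $[d]\setminus\{i\}$: by the definition of $\sigma(\G)$ applied to each ordered pair $(j,i)$ with $j\ne i$, at least $\sigma(\G)$ sets of $\G$ contain $j$ and miss $i$, and these lie in $\G_{\bar i}$. Applying Shearer's lemma to $\G_{\bar i}$ both unconditionally and conditionally on $X_i$, and subtracting the two inequalities, gives
\[
\sigma(\G)\,I(X_i;X_{[d]\setminus\{i\}})\;\le\;\sum_{g\in\G_{\bar i}}I(X_g;X_i).
\]
Chaining the three displayed inequalities yields $2m(\G)\epsilon\ge \sigma(\G)\,I(X_i;X_{[d]\setminus\{i\}})$, which rearranges to the claimed $I(p_{\{i\}};p_{[d]\setminus\{i\}})\le 2\rho(\G)\epsilon$. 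The only non-routine step is this dual application of Shearer's lemma to $\G_{\bar i}$: once one recognizes $\G_{\bar i}$ as a $\sigma(\G)$-cover, the covering multiplicity translates, via subtracting conditional from unconditional entropy bounds, directly into the target factor $\sigma(\G)$ on the mutual information; everything else is a routine entropy-chain-rule bookkeeping exercise.
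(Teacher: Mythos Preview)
Your argument has a genuine gap at the final step. You claim that since $\G_{\bar i}$ is a $\sigma(\G)$-cover of $[d]\setminus\{i\}$, applying Shearer's lemma ``both unconditionally and conditionally on $X_i$, and subtracting the two inequalities'' yields
\[
\sigma(\G)\,I(X_i;X_{[d]\setminus\{i\}})\ \le\ \sum_{g\in\G_{\bar i}} I(X_g;X_i).
\]
But both Shearer inequalities point the same way, namely $\sigma H(X_{[d]\setminus\{i\}})\le\sum_g H(X_g)$ and $\sigma H(X_{[d]\setminus\{i\}}\mid X_i)\le\sum_g H(X_g\mid X_i)$; subtracting two inequalities of the form $A\le B$ and $A'\le B'$ gives no information about the sign of $(B-B')-(A-A')$. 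And indeed the displayed inequality is \emph{false} in general. Take $d=3$, $i=3$, and let $X=(X_1,X_2,X_3)$ be uniform on $S=\{x\in\{0,1\}^3:\ x_1\oplus x_2\oplus x_3=0\}$. Then $I(X_3;X_1)=I(X_3;X_2)=0$ while $I(X_3;X_{\{1,2\}})=1$, so with $\G_{\bar 3}=\{\{1\},\{2\}\}$ and $\sigma=1$ your inequality would read $1\le 0$. In other words, you are implicitly asserting a superadditivity of $g\mapsto I(X_g;X_i)$ under covers, and no such inequality holds.

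Everything up to and including your bound $\sum_{g\in\G}H(X_g)\ge m(\G)H(X)+\sum_{g\in\G_{\bar i}}I(X_g;X_i)$ is correct, but from that point the ``global'' mutual-information terms $I(X_g;X_i)$ cannot be combined into $I(X_{[d]\setminus\{i\}};X_i)$ by a cover argument. The paper avoids this trap by never isolating the quantities $I(X_g;X_i)$ at all: instead it expands every $H(X_g)$ (and $H(X)$) coordinate-by-coordinate via the chain rule with respect to a fixed ordering placing $i$ last, applies ``conditioning does not increase entropy'' termwise, and then for each $j\ne i$ counts that at least $\sigma(\G)$ sets $g\in\G$ contain $j$ but not $i$. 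This produces, for each $j$, the single-coordinate gain $H(X_j\mid X_{[j-1]})-H(X_j\mid X_{[j-1]},X_i)=I(X_j;X_i\mid X_{[j-1]})$, and the chain rule for mutual information then legitimately sums these to $I(X_{[d]\setminus\{i\}};X_i)$. The essential difference is that the paper works with \emph{conditional} mutual informations $I(X_j;X_i\mid X_{[j-1]})$, which do telescope, rather than with the unconditional $I(X_g;X_i)$, which do not.
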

Note that, by monotonicity of mutual information, Lemma \ref{lem:TightToInfo} implies that for any $J \subset  [d]$ and any $i \not \in J$, we have
\begin{equation}\label{eq:SmallD}
D(p_{J \cup \{i\}}||p_Jp_{\{i\}}) = I(p_{\{i\}};p_J) \leq I(p_{\{i\}};p_{[d] \setminus \{i\}}) \leq 2 \rho(\G) \eps.
\end{equation}
Given a set $S \subset \Z^d$, we want to measure how far the uniform distribution $p$ on $S$ is from the product of some of its marginals. It turns out that a useful measure for us (which we call the `hole-weight') is the sum of the product of the marginals over all points {\em not} in $S$ (`holes'). Formally, if $(g_1,\ldots, g_r)$ is a partition of $[d]$, we define the {\em hole-weight} of $S$ with respect to $(g_1,\ldots,g_r)$ by
$$
\Hole_{g_1,\ldots, g_r}(S) := \sum_{x \not \in S} \prod_{j=1}^{r} p_{g_j}(x).
$$
In all but one case below, the partition of $[d]$ which defines the hole-weight will be of the form $(\{i\},[d]\setminus\{i\})$, so for brevity, we write $\Hole_i(S):= \Hole_{\{i\},[d]\setminus\{i\}}(S)$. Also, when stating and proving lemmas, instead of considering $S \subseteq \Z^{\{i\}} \times \Z^{[d] \setminus \{i\}}$, we will sometimes consider the general `two-dimensional' setting $S \subset X_1 \times X_2$, i.e. $S$ is simply a subset of a product of two sets. If $X_1$ and $X_2$ are sets, and $S \subset X_1 \times X_2$, we will write $\Hole(S):=\Hole_{\{1\},\{2\}}(S)$. 

The following claim bounds from above the hole-weight of $S$ by the Kullback-Leibler divergence between the uniform distribution on $S$, and the product of its marginals.
\begin{claim}
\label{clm:dHvsdD}
Let $S \subset \Z^d$ with $|S| < \infty$, let $p$ be the uniform distribution on $S$, and let $(g_1,\ldots g_r)$ be a partition of $[d]$. Let $p$ be the uniform distribution on $S$, and let $(p_{g_j})$ denote the corresponding marginals. Then 
\begin{equation}\label{eq:hole-div}
\Hole_{g_1,\ldots g_r}(S) \leq D\left(p\mid \mid \prod_{j=1}^{r} p_{g_j}\right).
\end{equation}
\end{claim}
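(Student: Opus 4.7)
The plan is to exploit the fact that $p$ is uniform on $S$, so that $\supp(p)=S$ and the divergence reduces to a sum over $S$ alone, to which a one-line Jensen (or equivalently log-sum) argument can be applied. Writing $q:=\prod_{j=1}^{r}p_{g_j}$ and $\alpha:=\sum_{x\in S}q(x)$, the definition of the hole-weight gives
\[
\Hole_{g_1,\ldots,g_r}(S)\;=\;\sum_{x\notin S}q(x)\;=\;1-\alpha,
\]
so the claim is equivalent to the inequality $D(p\|q)\geq 1-\alpha$.

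If there exists $x\in S$ with $q(x)=0$, then $\supp(p)\not\subset\supp(q)$ and $D(p\|q)=\infty$ by convention, so the claim is trivial. Assuming the contrary, since $\supp(p)=S$ I have
\[
D(p\|q)\;=\;\sum_{x\in S}p(x)\log\frac{p(x)}{q(x)}\;=\;-\sum_{x\in S}p(x)\log\frac{q(x)}{p(x)}.
\]
Applying Jensen's inequality to the convex function $-\log$ (equivalently, the log-sum inequality) with respect to the distribution $p$, I obtain
\[
D(p\|q)\;\geq\;-\log\!\Bigl(\sum_{x\in S}p(x)\cdot\frac{q(x)}{p(x)}\Bigr)\;=\;-\log\alpha.
\]

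To close the gap, I would apply the elementary inequality $-\log_2(1-t)\geq t$ valid for $t\in[0,1)$ (which follows from $-\ln(1-t)\geq t$ together with $\ln 2<1$) with $t=1-\alpha$, giving
\[
D(p\|q)\;\geq\;-\log\alpha\;=\;-\log\!\bigl(1-\Hole_{g_1,\ldots,g_r}(S)\bigr)\;\geq\;\Hole_{g_1,\ldots,g_r}(S),
\]
which is exactly \eqref{eq:hole-div}. There is essentially no obstacle in this proof; the only mild subtleties are the boundary case when $q$ vanishes on some point of $S$, and remembering that $\log$ throughout the paper denotes $\log_2$, so that the standard natural-logarithm inequality $-\ln(1-t)\geq t$ in fact yields a slightly stronger bound after dividing by $\ln 2<1$, which is precisely what is needed for the final step.
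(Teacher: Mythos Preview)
Your proof is correct and is essentially identical to the paper's own argument: the paper also writes $D(p\|\prod_j p_{g_j})=-\sum_{x\in S}p(x)\log\frac{\prod_j p_{g_j}(x_{g_j})}{p(x)}$, applies Jensen (convexity of $t\mapsto\log(1/t)$) to get $-\log\bigl(\sum_{x\in S}\prod_j p_{g_j}(x_{g_j})\bigr)=\log\frac{1}{1-\Hole(S)}$, and then uses $\log\frac{1}{1-\alpha}\geq\alpha$. One small remark: the boundary case you single out cannot actually occur, since $x\in S$ forces each marginal $p_{g_j}(x_{g_j})\geq p(x)>0$, so $q(x)>0$ automatically.
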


\begin{remark}
Of course, another very natural way of measuring how far $p$ is from the product of its marginals is to simply use the $\ell^1$-distance
\begin{equation}
\label{eq:l1}
\|p - \prod_{j=1}^{r} p_{g_j}\|_1.
\end{equation}
(Recall that if $p$ and $q$ are probability distributions on a finite or countable set $X$, then the {\em $\ell^1$-distance} between $p$ and $q$ is defined by
$$\|p-q\|_1 : = \sum_{x \in X} |p(x) - q(x)| = 2 \max\{p(S)-q(S):\ S \subset X\};$$
the quantity $\max\{p(S)-q(S):\ S \subset X\} = \tfrac{1}{2}\|p-q\|_1$ is often called the {\em total variation distance} between $p$ and $q$.)

One can bound the $\ell^1$-distance (\ref{eq:l1}) in terms of the divergence on the right-hand side of (\ref{eq:hole-div}), using Pinsker's inequality. Pinsker's inequality states that if $p$ and $q$ are two probability distributions on a finite or countable set $X$, then
\begin{equation} \label{eq:pinsker} \|p-q\|_1 \leq \sqrt{(2 \ln 2) \, D(p||q)}.\end{equation}
(Note that Pinkser originally proved \eqref{eq:pinsker} with a worse constant. The above form, in which the constant is sharp, is due independently to Kullback, Csisz\'ar and Kemperman.) Applying this yields
$$\|p - \prod_{j=1}^{r} p_{g_j}\|_1 \leq \sqrt{(2 \ln 2)\, D\left(p\mid \mid \prod_{j=1}^{r} p_{g_j}\right)}.$$
Unfortunately, this application of Pinsker's inequality introduces $\Theta(\sqrt{\epsilon})$-dependence in the conclusion of Theorem \ref{thm:uniform-cover-stability} (this was our original approach). We obtain $\Theta(\epsilon)$-dependence by relying only on the hole-weight.
\end{remark}

We now need a lemma saying that if the hole-weight of a two-dimensional set $S$ is small, then $S$ is close to a 2-dimensional box.
\begin{lemma}
\label{lem:FindBox1}
Let $X_1$ and $X_2$ be sets. Let $S \subset X_1 \times X_2$ with $|S| < \infty$. Let $p$ denote the uniform distribution on $S$, and let $p_1,p_2$ denote its marginals. Let $0 < \alpha <1$.
 Then there exists $R_1 \subset X_1$ such that 
$$p_1(X_1\setminus R_1) \leq \frac{2\Hole(S)}{\alpha},$$
and such that for every $x_1 \in R_1$, we have
$$
p_1(x_1) \geq \left(1-\frac{2 \Hole(S)}{\alpha}\right) \cdot 
\frac{(1-\alpha)}{|R_1|}.
$$
\end{lemma}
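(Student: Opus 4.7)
The plan is to construct $R_1$ via an iterative trimming procedure, combined with a charging argument based on the hole-weight. Set $\beta := 2\Hole(S)/\alpha$; if $\beta \ge 1$ the conclusion is trivial (take $R_1 := X_1$), so I will assume $\beta < 1$ throughout.

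I would initialise $R_1 := \pi_1(S)$ and then repeat: if there exists $x_1 \in R_1$ with $p_1(x_1) < (1-\alpha)/|R_1|$, remove from $R_1$ the element of smallest $p_1$-mass; otherwise halt. Upon termination, every remaining $x_1 \in R_1$ satisfies $p_1(x_1) \ge (1-\alpha)/|R_1| \ge (1-\beta)(1-\alpha)/|R_1|$, which immediately gives the second conclusion of the lemma.

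The main work is then to show that the total $p_1$-mass removed is at most $\beta$. Writing $S_{x_1} := \{x_2 \in X_2 : (x_1,x_2) \in S\}$ for the $x_1$-row, my key tool would be the elementary inequality
\[ p_2(S_{x_1}) \;\le\; p_1(x_1) \cdot |\pi_1(S)|, \]
which follows from the bound $p_2(x_2) \le |\pi_1(S)|/|S|$ (since each column of $S$ has at most $|\pi_1(S)|$ points). Using this, whenever an $x_1$ with $p_1(x_1) < (1-\alpha)/|\pi_1(S)|$ gets removed, the inequality forces $p_2(X_2 \setminus S_{x_1}) > \alpha$, so the contribution of $x_1$ to $\Hole(S) = \sum_{x_1'} p_1(x_1') \, p_2(X_2 \setminus S_{x_1'})$ is at least $\alpha\, p_1(x_1)$. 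Summing over all such removed elements gives a total $p_1$-mass bound of $\Hole(S)/\alpha = \beta/2$.

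The hardest step will be to extend this charging beyond the regime where $|R_1|$ is comparable to $|\pi_1(S)|$: once $|R_1|$ drops significantly, the trimming threshold $(1-\alpha)/|R_1|$ exceeds the ``safe'' level $(1-\alpha)/|\pi_1(S)|$, and newly removed elements need not satisfy $p_1(x_1) < (1-\alpha)/|\pi_1(S)|$. I plan to resolve this either (i) by splitting the analysis into two phases according to whether the current $|R_1|$ is at least $(1-\alpha)|\pi_1(S)|$ --- bounding the first phase via the charging above, and arguing that the second phase cannot proceed much further without violating the hole-weight hypothesis --- or (ii) by replacing the adaptive threshold $(1-\alpha)/|R_1|$ with the fixed threshold $(1-\alpha)/|\pi_1(S)|$, which makes the charging uniform at the cost of a complementary argument translating the resulting pointwise bound into the required form $(1-\beta)(1-\alpha)/|R_1|$.
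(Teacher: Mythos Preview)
Your overall framework matches the paper's: iterative trimming of low-mass fibres, with a charging argument showing each removed fibre contributes at least $\alpha \cdot p_1(x_1)$ (roughly) to $\Hole(S)$. The gap you yourself identify is real, and neither of your proposed fixes (i), (ii) closes it. With the fixed threshold of (ii), after trimming you only get $p_1(x_1) \ge (1-\alpha)/|\pi_1(S)|$, and since many tiny-mass fibres can be removed with negligible total mass, $|R_1|$ may be far smaller than $|\pi_1(S)|$, so this does not convert to the required $(1-\beta)(1-\alpha)/|R_1|$. The two-phase idea (i) founders for the same reason: once the adaptive threshold exceeds $(1-\alpha)/|\pi_1(S)|$, your inequality $p_2(S_{x_1}) \le p_1(x_1)\,|\pi_1(S)|$ gives no lower bound on $p_2(X_2\setminus S_{x_1})$, and you have no replacement.

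The missing idea is to make the charging inequality itself adaptive. Rather than the crude column bound $p_2(x_2)\le |\pi_1(S)|/|S|$, split
\[
p_2(S_{x_1}) \;=\; \sum_{y\in S_{x_1}}\sum_{x_1'\in\pi_1(S)} p(x_1',y)
\]
into the contribution from $x_1'$ still in the current $R^{(j)}$ (bounded by $|R^{(j)}|\cdot|S_{x_1}|/|S|$, which uses $|R^{(j)}|$ rather than $|\pi_1(S)|$) and the contribution from already-removed $x_1'$ (bounded by the total mass $\sum_{r<j}\eps^{(r)}$ removed so far). With the trimming threshold $p_1(x_1)/p_1(R^{(j)}) \le (1-\alpha)/|R^{(j)}|$ this yields
\[
p_2(X_2\setminus S_{x_1}) \;\ge\; \alpha\Bigl(1-\sum_{r<j}\eps^{(r)}\Bigr),
\]
so the per-step charge degrades only by the factor $(1-\sum_{r<j}\eps^{(r)})$. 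Summing, $\Hole(S)\ge \alpha\sum_j \eps^{(j)}(1-\sum_{r<j}\eps^{(r)}) \ge \alpha(\eps_1 - \tfrac12\eps_1^2)\ge \alpha\eps_1/2$, which is exactly the bound you need. The adaptive split is the one genuinely new ingredient your proposal is missing.
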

The idea behind Lemma \ref{lem:FindBox1} is that the set $R_1 \subset X_1$ is a good candidate to be one of the multiplicands (`edges') of a box approximating $S$: on the one hand, it captures most of $p_1(S)$, and on the other hand, $p_1$ restricted to $R_1$ is close (in a sense) to being uniform. Indeed, the main step in the proof of Theorem \ref{thm:uniform-cover-stability} below is to apply Lemma \ref{lem:FindBox1}, with an appropriate value of $\alpha$, yielding (for each $i \in [d])$ a set $R_i \subset \Z^{\{i\}}$, and then to show that $S$ is close in symmetric difference to the Cartesian product of the $R_i$'s.  

The above lemmas now yield the proof of our main theorem.

\begin{proof}[Proof of Theorem \ref{thm:uniform-cover-stability}]
Let $\G \subset \mathcal{P}([d])$ with $m(\G),\sigma(\G) >0$. We may and shall assume that $\eps < ((4d^2+64d)\rho(\G))^{-1}$, as otherwise the conclusion of the theorem holds trivially. Given a set $S \subset \mathbb{Z}^d$ with $|S| < \infty$, and with
$$
|S| \geq (1-\eps) \left( \prod_{g \in \G} |\pi_g (S)| \right)^{1/m(\G)},
$$ we apply Lemma \ref{lem:TightToInfo} to deduce that for all $i \in [d]$,
$$
 I(p_{\{i\}};p_{[d]\setminus \{i\}}) \leq 2 \rho(\G) \eps.
$$
This implies, via (\ref{eq:SmallD}) and Claim \ref{clm:dHvsdD}, that for every $i$ in $[d]$,
$$ 
\Hole_i(S) \leq 2 \rho(\G) \eps.
$$
Next, for each $i \in [d]$, we apply Lemma \ref{lem:FindBox1} to $S \subseteq \Z^{\{i\}} \times \Z^{[d]\setminus \{i\}}$ (i.e., we take $X_1 = \Z^{\{i\}}$ and $X_2 = \Z^{[d]\setminus \{i\}}$), so that $\Hole(S) = \Hole_i(S) \leq 2 \rho(\G) \eps$; we take $\alpha = 1/d$. This yields (for every $i \in [d]$) a set 
$R_i \subset \Z^{\{i\}}$ such that
$$
p_i( \Z^{\{i\}} \setminus R_i) \leq 2 d \cdot \Hole_i(S) \leq 4 d \rho(\G) \eps,
$$
and such that for any $x_i\in R_i$,
$$p_i(x_i) \geq (1-2d\cdot  \Hole_i(S))(1-1/d) \frac{1}{|R_i|} \geq (1- 4 d \rho(\G) \eps ) (1-1/d)\frac{1}{|R_i|}
\geq (1-1/d)^2 \frac{1}{|R_i|}.$$
Let
$$R :=R_1 \times R_2 \times \ldots \times R_d.$$ 
By the union bound,
$$
p( S \setminus R) \leq 4 d^2 \rho(\G) \eps,
$$ 
i.e.
\begin{equation}\label{eq:s-r} |S \setminus R| \leq 4 d^2 \rho(\G) \eps |S|.\end{equation}
On the other hand, for every $x \in R \setminus S$, we have
$$
\prod_{i=1}^d p_i(x_i) \geq \prod_{i=1}^d \frac{(1- 1/d)^{2}}{  |R_i|} = \frac{(1-1/d)^{2d}}{|R|} \geq \frac{1}{16|R|}.
$$
Hence,
$$
\frac{|R \setminus S|}{16|R|} \leq \sum_{x \in R \setminus S} \prod_{i=1}^d p_i(x_i) \leq  \Hole_{(1,2,\ldots,d)}(S).
$$
Applying Claim \ref{clm:dHvsdD} and equation (\ref{telescope}), it follows that 
$$
\frac{|R \setminus S|}{16|R|} \leq D\left(p\mid \mid \prod_{i=1}^{d} p_{i}\right) = \sum_{i=2}^{d} D(p_{[i]}\mid \mid p_{[i-1]}p_i).
$$
Applying the bound (\ref{eq:SmallD}) gives
$$
\frac{|R \setminus S|}{16|R|} \leq 2 d \rho(\G) \eps,
$$
which implies that $|R \setminus S|  \leq 32 d \rho(\G) \eps |R|$. Hence, 
$|S| \geq (1-32 d \rho(\G) \eps)|R|$, and so 
\begin{equation}\label{eq:r-s} |R \setminus S| \leq \frac{32 d \rho(\G) \eps}{1- 32 d \rho(\G) \eps} |S| 
\leq 64 d \rho(\G) \eps |S|.\end{equation}
Combining (\ref{eq:s-r}) and (\ref{eq:r-s}) gives
$$
|S \triangle R| \leq (4d^2+64d)\rho(\G) \eps |S|,
$$
so we may take $B=R$, completing the proof.
\end{proof}

For completeness, we now present the deduction of Corollary \ref{corr:uniform-cover-stability} from Theorem \ref{thm:uniform-cover-stability}, using the approximation argument outlined in \cite{loomis1949}.

\begin{proof}[Proof of Corollary \ref{corr:uniform-cover-stability}]
Let $m \in \mathbb{N}$. Let $\G \subset \mathcal{P}([d])$ be a uniform $m$-cover. Let $S \subset \mathbb{R}^d$ be a body such that
$$ \mu(S) \geq (1-\eps) \left( \prod_{g \in \G} \mu(\pi_g (S)) \right)^{1/m}.$$

Fix $\eta \in (0,1)$. Since $\overline{S}$ is compact and $\mu(S) >0$, there exists a compact set $K \subset S$ such that $\mu(K) \geq (1-\eta)\mu(S)$. Choose an open cover $\mathcal{C}$ of $K$ by open cubes with corners at rational coordinates, such that all the cubes in $\mathcal{C}$ are contained within $S$. Since $K$ is compact, we may choose a finite subset $\mathcal{C}' \subset \mathcal{C}$ such that $\mathcal{C}'$ is a cover of $K$. Choose an axis-parallel grid of some side-length $\delta >0$, which is a common refinement of all the sets in $\mathcal{C}'$ (meaning that all the open cubes in $\mathcal{C}'$ are unions of open grid-cubes). Let $F$ be the union of all the open grid-cubes which are contained in $S$. Then $\mu(F) \geq (1-\eta)\mu(S)$. Let $N$ be the number of open grid-cubes in $F$, and for each $g \subset [d]$, let $N_{g}$ be the number of (lower-dimensional) open grid-cubes in the projection of $F$ onto the plane $\{x:\ x_i = 0\ \forall i \notin g\}$. Then we have
\begin{align*} N^m \delta^{md} & = \mu(F)^m\\
& \geq (1-\eta)^m \mu(S)^m\\
& \geq (1-\eta)^m (1-\epsilon)^m \prod_{g \in \G} \mu(\pi_g (S))\\
& \geq (1-\eta)^m (1-\epsilon)^m \prod_{g \in \G} N_{g} \delta^{|g|}\\
& = (1-\epsilon-\eta + \epsilon \eta)^m \delta^{m d} \prod_{g \in \G} N_{g}.
\end{align*}
Hence, cancelling the common factor of $\delta^{md}$ and rearranging, we obtain
$$N \geq (1-\epsilon - \eta + \epsilon \eta) \left(\prod_{g \in \G} N_{g}\right)^{1/m}.$$
Therefore, by Theorem \ref{thm:uniform-cover-stability}, there exists a box $B_{\eta} \subset \mathbb{R}^d$ (which is a union of open grid-cubes), such that 
$$\mu(F \Delta B_{\eta}) \leq b \rho(\mathcal{G})(\epsilon+\eta-\epsilon \eta)\mu(F).$$
Hence,
$$\mu(S \Delta B_{\eta}) \leq b \rho(\mathcal{G}) (\epsilon+\eta-\epsilon \eta)\mu(S) + \eta \mu(S).$$
Since $\eta \in (0,1)$ was arbitrary, it follows by a compactness argument that there exists a box $B \subset \mathbb{R}^d$ such that
$$\mu(S \Delta B) \leq b \rho(\mathcal{G}) \epsilon \mu(S).$$
\end{proof}

\subsection{Proofs of the main lemmas}\label{ProofsOfLemmas}
We now present the proofs of our main lemmas.
\begin{proof}[Proof of Lemma \ref{lem:TightToInfo}]
Our proof is information-theoretic, inspired by the technique of Radhakrishnan from \cite{radhakrishnan}.

Let $d,\G,\epsilon,S$ and $p$ be as in the statement of the lemma. Let $X$ be a random variable uniformly distributed on $S$. Assume without loss of generality that $i=d$. Recall that $H(X) = \log(|S|)$
and that by (\ref{eq:uniform-bound}), for all $g \subset [d]$, we have $H(X_g) \leq \log(|\pi_g (S)|)$. Set  
$m := m(\G)$, $\sigma := \sigma(\G)$ and $\rho:=\rho(\G)$.
Since 
$|S| \geq (1-\eps) \left( \prod_{g \in \G} |\pi_g (S)| \right)^{1/m}$, we have
\begin{align}\label{eq:4}
H(X) & = \log |S| \geq \log(1-\epsilon) + \frac{1}{m} \sum_{g \in \G} \log |\pi_g(S)| \geq \log(1-\eps) + \frac{1}{m} \sum_{g \in \G} H(X_g) \nonumber \\
& \geq - 2 \eps + \frac{1}{m} \sum_{g \in \G} H(X_g).
\end{align}
Hence,
\begin{align*}
2 \eps &\geq \frac{1}{m} \sum_{g \in \G} H(X_g) - H(X)\\
& = \frac{1}{m} \sum_{g \in \G} H(X_g) - H(X_d) -H(X_{[d-1]}\mid X_d)\tag{by the chain rule} \\
&= \frac{\sum_{g \in \G:\atop d \in g} (H(X_d)+H(X_{g \setminus \{d\}}\mid X_d))+ \sum_{g \in \G:\atop d \not \in g}H(X_g)}{m}
 - H(X_d) -H(X_{[d-1]}\mid X_d) \\
&\geq \frac{\sum_{g \in \G:\atop d \in g} H(X_{g \setminus \{d\}}\mid X_d)+ \sum_{g \in \G:\atop d \not \in g}H(X_g)}{m}
  -H(X_{[d-1]}\mid X_d) \tag{since, by definition of $m=m(\G)$, there are at least $m$ sets $g \in \G$ with $d \in g$} \\
 &= \sum_{ j < d}\left(  \frac{\sum_{g \in \G:\atop d,j \in g} H(X_j\mid X_{g \cap [j-1]},X_d)+ \sum_{g \in \G:\atop g \cap \{d,j\}=j}H(X_j\mid X_{g \cap [j-1]})}{m}
  -H(X_{j}\mid X_{[j-1]},X_d) \right) \tag{by the chain rule} \\
   &\geq  \sum_{ j < d}\left(  \frac{\sum_{g \in \G:\atop d,j \in g} H(X_j\mid X_{[j-1]},X_d)+ \sum_{g \in \G:\atop g \cap \{d,j\}=j}H(X_j\mid X_{[j-1]})}{m}
  -H(X_{j}\mid X_{[j-1]},X_d) \right) \tag{since extra conditioning does not increase entropy}\\
 &\geq \frac{\sigma}{m}\sum_{ j < d} \left( H(X_j\mid X_{[j-1]}) - H(X_j\mid X_{[j-1]},X_d) \right) \tag{using the definitions of $m$ and $\sigma$}\\
 &= \rho^{-1} \left(H(X_{[d-1]}) - H(X_{[d-1]} \mid X_d)\right) \tag{by the chain rule}\\
 &= \rho^{-1} I(X_{[d-1]};X_d),
\end{align*}
as required.
\end{proof}

\begin{remark}
As mentioned in the Introduction, Lemma \ref{lem:TightToInfo} can also be proved by appealing to an entropy inequality in the paper \cite{ballister-bollobas} of Balister and Bollob\'as. Indeed, without loss of generality taking $i=d$, it suffices to prove the inequality
$$\sigma H(X_d) + \sigma H(X_{[d-1]}) + (m-\sigma) H(X) \leq \sum_{g \in \G} H(X_g),$$
which follows from two applications of Theorem 6 in \cite{ballister-bollobas}, the first application being with $\mathcal{A} = \{g \in \G:\ d \notin \G\}$. We have opted to give the self-contained proof above, as it is not much longer. 
\end{remark}

\begin{proof}[Proof of Claim \ref{clm:dHvsdD}]
Recall that
$\log \frac{1}{1-\alpha} \geq \alpha$ for all $\alpha \in [0,1)$,
since
$1-\alpha \leq e^{-\alpha} \leq 2^{-\alpha}$. Using this, and the convexity of $t \mapsto \log (1/t)$, we have
\begin{align*}
D\left(p \mid \mid \prod_{j=1}^{r} p_{g_j}\right)& = - \sum_{x \in S} p(x)  \log \frac{\prod_{j=1}^{r} p_{g_j}(x_{g_j})}{p(x)} \\
&  \geq  - \log \sum_{x \in S} p(x) \frac{\prod_{j=1}^{r} p_{g_j}(x_{g_j})}{p(x)} \\
& =  \log \frac{1}{1- \sum_{x \not \in S} \prod_{j=1}^{r} p_{g_j}(x_{g_j})}\\
 & \geq \sum_{x \not \in S} \prod_{j=1}^{r} p_{g_j}(x_{g_j}) \\
 &= \Hole_{g_1,\ldots g_r}(S).
\end{align*}
\end{proof}

\subsubsection*{Trimming a 2-dimensional set to yield a rectangle}
In this subsection, we deal with the most  combinatorially-flavoured ingredient in our paper, the proof of Lemma \ref{lem:FindBox1}. Given a set $S \subset X_1 \times X_2$, we proceed to trim away some of its `vertical fibres' 
 (sets of the form $(\{x_1\} \times X_2) \cap S$, for some $x_1 \in X_1$), in such a way that
none of the remaining fibres is very short compared to the others. This leaves us with a subset $R_1 \subset X_1$, such that weight of each remaining vertical fibre of $R_1$ is at least a constant fraction of the average weight of the remaining fibres. The total amount of mass trimmed is bounded from above in terms of $\Hole(S)$. Applying this procedure with $\alpha=1/2$, and then repeating it to find a similar set $R_2 \subset X_2$, yields the following.
\begin{lemma}
\label{lem:RisZ2}
Let $X_1$ and $X_2$ be sets. If $S \subset X_1 \times X_2$ with $|S| < \infty$, then there exist $R_1 \subset X_1$ and $R_2 \subset X_2$ such that
$$|S \Delta (R_1 \times R_2)| \leq 20\, \Hole(S)\, |S|.$$
\end{lemma}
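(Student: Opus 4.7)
The strategy is exactly the one hinted at in the prose preceding the lemma: apply Lemma~\ref{lem:FindBox1} once to find $R_1 \subset X_1$, once more (with the two coordinates swapped) to find $R_2 \subset X_2$, and then verify that $R_1 \times R_2$ is a good approximation to $S$. The suggested choice of parameter is $\alpha = 1/2$. I would begin by disposing of the trivial case: if $\Hole(S) \geq 1/20$, one may simply take $R_1 = R_2 = \emptyset$, so $|S \Delta \emptyset| = |S| \leq 20\,\Hole(S)\,|S|$. Hence I may and shall assume $\Hole(S)$ is smaller than a fixed small absolute constant.

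Applying Lemma~\ref{lem:FindBox1} with $\alpha = 1/2$ yields $R_1 \subset X_1$ satisfying $p_1(X_1 \setminus R_1) \leq 4\,\Hole(S)$ and $p_1(x_1) \geq (1 - 4\,\Hole(S)) \cdot \tfrac{1}{2|R_1|}$ for every $x_1 \in R_1$. Since $\Hole(S)$ is symmetric under swapping the two coordinates, an analogous application to $S \subset X_2 \times X_1$ produces $R_2 \subset X_2$ with the corresponding estimates. The two ``small marginal mass outside $R_i$'' bounds combine via a union bound to control one half of the symmetric difference:
$$|S \setminus (R_1 \times R_2)| \leq |S|\bigl(p_1(X_1 \setminus R_1) + p_2(X_2 \setminus R_2)\bigr) \leq 8\,\Hole(S)\,|S|.$$

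For the reverse inclusion, the key observation is that for every $(x_1,x_2) \in (R_1 \times R_2) \setminus S$, the lower bounds give
$$p_1(x_1)\,p_2(x_2) \geq \frac{(1 - 4\,\Hole(S))^2}{4\,|R_1 \times R_2|}.$$
Summing over $(R_1 \times R_2) \setminus S$ and recalling that $\sum_{(x_1,x_2)\notin S} p_1(x_1)p_2(x_2) = \Hole(S)$ yields
$$|(R_1 \times R_2) \setminus S| \leq \frac{4\,\Hole(S)}{(1 - 4\,\Hole(S))^2}\,|R_1 \times R_2|.$$
Combining this with the trivial inequality $|S \cap (R_1 \times R_2)| \leq |S|$ gives $|R_1 \times R_2| \leq |S|/\bigl(1 - 4\Hole(S)/(1-4\Hole(S))^2\bigr)$, so under the assumed smallness of $\Hole(S)$ we obtain $|R_1 \times R_2| \leq C\,|S|$ with a constant $C$ close to $1$. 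Adding this to the earlier estimate for $|S \setminus (R_1 \times R_2)|$ produces a bound of the form $(\text{const})\cdot\Hole(S)\cdot|S|$.

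The only real obstacle is arithmetic bookkeeping: choosing the threshold below which $\Hole(S)$ counts as ``small'' and tuning the ensuing constants so that the two contributions to $|S \Delta (R_1 \times R_2)|$ add up to at most $20\,\Hole(S)\,|S|$. The structural content is already fully packaged inside Lemma~\ref{lem:FindBox1}; all that remains is this constant-juggling, together with the symmetric application to the second coordinate.
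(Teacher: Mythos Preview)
Your proposal is correct and follows essentially the same argument as the paper: dispose of the case $\Hole(S) \geq 1/20$ trivially, apply Lemma~\ref{lem:FindBox1} with $\alpha=1/2$ to each coordinate, bound $|S\setminus R|$ by the union bound, and bound $|R\setminus S|$ via the pointwise lower bound on $p_1(x_1)p_2(x_2)$ summed over holes. The only difference is that the paper carries out the ``constant-juggling'' explicitly (simplifying $(1-4\Hole(S))^2/4 > 1/7$ under $\Hole(S)<1/20$, then getting $|R\setminus S|\le 7\Hole(S)|R|\le 12\Hole(S)|S|$, so $8+12=20$), whereas you leave this as bookkeeping --- but your assessment that no further ideas are needed is accurate.
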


We will not use this lemma directly in our proof of our main theorem, but its proof is included for the reader's interest at the end of this subsection.

\begin{proof}[Proof of Lemma \ref{lem:FindBox1}]
 For $j \in \{1,2\}$, let $\pi_j(S)$ denote the natural projection of $S$ onto $X_j$. We find an appropriate set $R_1$ by iteratively removing from
  $\pi_1 (S)$ those points $x_1$ for which $p_1(x_1)$ is too `small',
  namely, those points whose fibre has size less than $(1-\alpha)$ times the average size of a fibre.

To this end, we define recursively:
$T^{(0)} := \emptyset$ and $R^{(1)}: = \pi_1 (S)$,
and for each $j \in \mathbb{N}$,
\begin{align*}
T^{(j)} & := \left\{ x_1 \in R^{(j)} \ : \
          \frac{p_1(x_1)}{p_1\left(R^{(j)}\right)}  \leq \frac
          {1-\alpha}{\left|  R^{(j)} \right|} \right\},\\
 R^{(j+1)} & := \pi_1(S) \setminus \bigcup_{r=1}^j T^{(r)}, \\
\eps^{(j)} & := p_1\left(T^{(j)} \right).
\end{align*}
Define the limit objects:
\begin{align*}
U^{(\infty)} & := \bigcup_{j= 1}^\infty T^{(j)}, \\
R_1 & := \pi_1(S) \setminus U^{(\infty)}, \\
\epsilon_1 & := p_1\left(  U^{(\infty)}\right) = \sum_{j=1}^\infty  \eps^{(j)} .
\end{align*}
Since $S$ is finite,
this process stabilizes after finitely many steps.
If we can show that 
\begin{equation}\label{eq:epsilon1bound} \eps_1 = p_1(\pi_1(S) \setminus R_1) \leq \frac{2\, \Hole(S)}{\alpha},\end{equation}
then by definition, for every $x_1 \in R_1$, we indeed have
$$p_1(x_1) \geq
p_1\left(R_1\right) \cdot \frac {1-\alpha}{\left|  R_1 \right|} 
\geq
\left(1-\frac{2 \Hole(S)}{\alpha}\right) \cdot \frac{(1-\alpha)}{|R_1|},$$  
proving Lemma \ref{lem:FindBox1}. 

\medskip To obtain
(\ref{eq:epsilon1bound}), it suffices to prove the following claim.
\begin{claim}\label{claim:SmallB1}
\ \ $\alpha \sum_{j=1}^\infty \eps^{(j)} \left(1 - \sum_{1 \leq r < j} \eps^{(r)}\right) \leq  \Hole(S).$
\end{claim}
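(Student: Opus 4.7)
The plan is to prove, for each $j\ge 1$, the per-step inequality
$$\sum_{x_1 \in T^{(j)}} p_1(x_1)\,(1 - p_2(S_{x_1}))\ \geq\ \alpha\,\eps^{(j)}\!\left(1 - \sum_{r<j}\eps^{(r)}\right),$$
where $S_{x_1} := \{x_2 \in X_2 : (x_1,x_2) \in S\}$ denotes the fiber of $S$ above $x_1$. Summing over $j$ and using the pairwise disjointness of the $T^{(j)} \subset \pi_1(S)$ then yields the claim, since
$$\sum_j \sum_{x_1 \in T^{(j)}} p_1(x_1)(1 - p_2(S_{x_1}))\ \leq\ \sum_{x_1 \in \pi_1(S)} p_1(x_1)(1 - p_2(S_{x_1}))\ =\ \sum_{x \notin S} p_1(x_1)p_2(x_2)\ =\ \Hole(S).$$

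\textbf{The key step.} Fix $j$ and abbreviate $A := p_1(R^{(j)}) = 1 - \sum_{r<j}\eps^{(r)}$ and $N := |R^{(j)}|$. The idea is to pass to the restricted set $S^{(j)} := S \cap (R^{(j)} \times X_2)$ and let $p_2^{(j)}$ denote the $X_2$-marginal of the uniform distribution on $S^{(j)}$. Two elementary pointwise inequalities do the work. First, every column of $S^{(j)}$ has at most $N$ entries, while $|S^{(j)}| = A|S|$, so $p_2^{(j)}(x_2) \leq N/(A|S|)$; combined with $|S_{x_1}| = p_1(x_1)|S|$ and the trimming bound $p_1(x_1) \leq (1-\alpha)A/N$ that holds for every $x_1 \in T^{(j)}$, this yields
$$p_2^{(j)}(S_{x_1})\ \leq\ p_1(x_1)\,\frac{N}{A}\ \leq\ 1 - \alpha,\qquad\text{hence}\qquad 1 - p_2^{(j)}(S_{x_1})\ \geq\ \alpha\quad (x_1 \in T^{(j)}).$$
Second, since each column of $S^{(j)}$ is a sub-column of the corresponding column of $S$, we have $A\,p_2^{(j)}(x_2) \leq p_2(x_2)$ pointwise; summing this inequality over $x_2 \notin S_{x_1}$ gives $1 - p_2(S_{x_1}) \geq A\,(1 - p_2^{(j)}(S_{x_1}))$. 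Combining the two bounds yields $1 - p_2(S_{x_1}) \geq \alpha A$ for every $x_1 \in T^{(j)}$, and multiplying by $p_1(x_1)$ and summing over $T^{(j)}$ produces the per-step inequality above.

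\textbf{The main obstacle.} The subtle point is that one cannot hope to bound $1 - p_2(S_{x_1})$ from below for a trimmed $x_1$ by working with the global marginal $p_2$ directly: even though the fiber $S_{x_1}$ is short, it could in principle consist precisely of the heaviest $p_2$-atoms, so $1 - p_2(S_{x_1})$ need not be $\Omega(\alpha)$ on its own. The detour through $p_2^{(j)}$ is exactly what circumvents this difficulty — inside $R^{(j)}$ the column weights are uniformly bounded by $N/(A|S|)$, which matches the trimming threshold precisely — and the pointwise comparison $A\,p_2^{(j)} \leq p_2$ then transports the bound back to the global marginal, losing only the expected factor of $A$.
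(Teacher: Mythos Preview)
Your argument is correct and is essentially the paper's own proof, just repackaged through the auxiliary marginal $p_2^{(j)}$. The paper bounds $p_2(S_{x_1})$ directly by splitting the sum defining $p_2$ into contributions from $x_1' \in R^{(j)}$ and $x_1' \notin R^{(j)}$; the first contribution is precisely your $A\,p_2^{(j)}(S_{x_1}) \leq (1-\alpha)A$ and the second is at most $1-A$, giving the same bound $1 - p_2(S_{x_1}) \geq \alpha A$ that you obtain via the pointwise comparison $A\,p_2^{(j)} \leq p_2$ summed over the complement of $S_{x_1}$.
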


Indeed, Claim \ref{claim:SmallB1} implies
\begin{align*}
\frac{1}{\alpha} \Hole(S) 
& \geq \sum_{j=1}^\infty \eps^{(j)} \left(1 - \sum_{1 \leq r < j} \eps^{(r)}\right) \\
& = \left( \sum_{j=1}^\infty \eps^{(j)} \right) - \left(\sum_{j=1}^\infty \sum_{1 \leq r < j} \eps^{(j)} \eps^{(r)}\right)
\geq \eps_1 - \frac{1}{2} \eps_1^2 \geq \frac{\eps_1}{2},
\end{align*}
implying (\ref{eq:epsilon1bound}).

It remains, therefore, to prove Claim~\ref{claim:SmallB1}.
\newline

\noindent {\it Proof of Claim~\ref{claim:SmallB1}.}
 Fix $j \in \mathbb{N}$ and let $x_1 \in T^{(j)}$. Let 
$$
Y = Y(x_1) := \{y \in \pi_2 (S) : (x_1,y) \in S\}
$$
be the fibre of $x_1$,  and let $\overline{Y} = \overline{Y}(x_1) =
\pi_2(S) \setminus Y$. We first bound the mass of the set of elements of $S$ whose projection
lies in $Y$. 
  
By definition, we have
$$p_1(R^{(j)}) = 1 - \sum_{r<j}\eps^{(r)}$$
and
$$p_1(x_1) \leq \frac{(1-\alpha) p_1(R^{(j)})}{|R^{(j)}|}.$$
So
$$
\left|R^{(j)} \times Y  \right| = \left|R^{(j)}\right|\left|Y\right| = \left| R^{(j)}\right| \cdot p_1(x_1) |S| 
\leq (1-\alpha) |S|  \left( 1- \sum_{r<j}\eps^{(r)} \right) .
$$
We therefore have
\begin{align*}
\sum_{y \in Y} p_2(y) 
& = \sum_{y \in Y} \sum_{x'_1 \in \pi_1 (S)} p(x'_1,y) \\
& \leq \frac{\left| R^{(j)} \times Y \right|}{|S|} +\sum_{y \in Y} 
 \sum_{x'_1 \not \in R^{(j)}}  p(x'_1,y)  
\tag{since $p$ is uniform on $S$} \\
& \leq (1-\alpha) \left( 1- \sum_{r<j}\eps^{(r)} \right) + \sum_{x'_1 \notin R^{(j)}} \sum_{y \in \pi_2(S)} p(x'_1,y)\\
& = (1-\alpha) \left( 1- \sum_{r<j}\eps^{(r)} \right)  + \sum_{r<j}\eps^{(r)}\\
& = 1-\alpha + \alpha \sum_{r < j} \eps^{(r)}.
\end{align*}
It follows that
\begin{align}\label{eq:1}
\sum_{y \in \overline{Y}} p_2(y) 
& \geq 1 - \left(1-\alpha + \alpha \sum_{r < j} \eps^{(r)} \right)  
=  \alpha \left( 1 - \sum_{r<j}\eps^{(r)} \right) .
\end{align}
Now summing over $x_1$ and using \eqref{eq:1}, we have 
$$
\sum_{x_1 \in T^{(j)}}p_1(x_1) \sum_{ y \in \overline{Y}(x_1)} p_2(y) \geq \eps^{(j)} \alpha \left( 1 - \sum_{r<j}\eps^{(r)} \right) .
$$
Finally, summing over all $j$, and recalling that if $y \in \overline{Y}(x_1)$ then $(x_1,y) \not \in S$, we have
$$
\Hole(S) = \sum_{(x_1,y) \not \in S} p_1(x_1) p_2(y) \geq \alpha
\sum_{j=1}^\infty \eps^{(j)}  \left( 1 - \sum_{r<j}\eps^{(r)} \right) .
$$
This completes the proof of Claim \ref{claim:SmallB1}, and thus of Lemma \ref{lem:FindBox1}.
\end{proof}

\begin{proof}[Proof of Lemma \ref{lem:RisZ2}.]
  We may assume that $\Hole(S) < 1/20$, otherwise the conclusion of the lemma holds with $R_1=R_2=\emptyset$. Let $R_1 \subset X_1$ be the set given by
  Lemma~\ref{lem:FindBox1} with $\alpha = 1/2$. Let
  $R_2 \subset X_2$ be the set given by Lemma~\ref{lem:FindBox1}
  with $\alpha=1/2$, when coordinates 1 and 2 are interchanged. Let $R = R_1 \times R_2$.  By the union bound, we have
  \begin{equation}
    \label{eq:2}
\frac{|S \setminus R|}{|S|} \leq 2\cdot \frac{2\, \Hole(S)}{\alpha} = 8\, \Hole(S).    
  \end{equation}
On the other hand, for every $x \in R$, 
we have
$$p_1(x_1)p_2(x_2) \geq 
\left(1-\frac{2\, \Hole(S)}{\alpha}\right)^2 (1-\alpha)^2 \frac{1}{|R|}
> \frac{1}{7 |R|}.$$
Therefore,
\begin{align*}
\Hole(S)
\geq \sum_{x \in R \setminus S} 
p_1(x_1)p_2(x_2) \geq |R \setminus S| \cdot 
\frac{1}{7 |R|} ,
\end{align*}
which implies that $|R \setminus S|  \leq 7\, \Hole(S)\, |R|$. It follows that
$$|S| \geq |S\cap R| \geq (1-7\, \Hole(S))|R| \geq 13|R|/20,$$
and therefore
\begin{equation}
  \label{eq:3}
| R \setminus S| \leq 12\, \Hole(S)\, |S|.  
\end{equation}
Combining \eqref{eq:2} and \eqref{eq:3}, we have $|S \triangle R| \leq 20\, \Hole(S)\, |S|$, completing the proof.
\end{proof}

\section{A stability result for the edge-isoperimetric inequality in $\mathbb{L}^d$}
\label{sec:edgeIso}
In this section, we prove Theorem \ref{thm:edge-iso-stability},
our stability result for the edge-isoperimetric inequality in 
the lattice $\mathbb{L}^d$, using Theorem \ref{thm:uniform-cover-stability} and some additional combinatorial arguments.

We start with a short proof of the isoperimetric inequality (Theorem \ref{thm:edge-iso}), as it will be useful to refer to it later.

\begin{proof}[Proof of Theorem \ref{thm:edge-iso}.]
We write $\partial_i(S)$ for the set of edges in $\partial S$ of the form $\{x,x+e_i\}$, i.e. the set of all direction-$i$ edges of $\partial S$. Since $|S| < \infty$, for each $x \in S$, there are at least two direction-$i$ edges of $\partial S$ which project to $\pi_{[d] \setminus \{i\}}(x)$, and therefore
$$|\partial_i(S)| \geq 2|\pi_{[d] \setminus \{i\}} (S)|.$$
Summing over all $i$, we obtain
$$|\partial S| = \sum_{i=1}^{d}|\partial_i(S)| \geq 2 \sum_{i=1}^{d} |\pi_{[d] \setminus \{i\}} (S)|.$$
The AM-GM inequality and the Loomis-Whitney inequality yield
\begin{equation}\label{eq:am-gm}
|\partial S| \geq 2 \sum_{i=1}^{d} |\pi_{[d] \setminus \{i\}} (S)| \geq 2d \left(\prod_{i=1}^{d} |\pi_{[d] \setminus \{i\}} S|\right)^{1/d} \geq 2d|S|^{(d-1)/d}.
\end{equation}
\end{proof}

The following easy stability result for the AM-GM inequality will be useful in our proof of Theorem \ref{thm:edge-iso-stability}.

\begin{prop}
\label{prop:AM-GM-stability}
Let $0 \leq \eps \leq 1/(16 d)$. Let $z_1 \geq z_2 \geq \ldots \geq z_d > 0$ be such that
$$\frac{1}{d} \sum_{i=1}^{d} z_i \leq (1+\eps) \left( \prod_{i=1}^{d} z_i \right)^{1/d}.$$
Let $G := \left( \prod_{i=1}^{d} z_i\right)^{1/d}$. Then
$$\forall i \in [d],\qquad (1-2d\sqrt{d\epsilon}) G \leq z_i \leq
(1+2\sqrt{d\epsilon}) G.$$
\end{prop}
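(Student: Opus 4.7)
The natural approach is to normalize and then compare to the standard Csisz\'ar/I-divergence-type function $f(y)=y-1-\log y$, which satisfies $f\ge 0$ with equality at $y=1$ and is convex. First, I would set $y_i := z_i/G$, so that the hypothesis $\prod y_i =1$ and the near-tightness of AM--GM becomes $\sum_{i=1}^d y_i \le d(1+\eps)$. Using $\sum_i \log y_i = 0$, this immediately gives the key inequality
\[
\sum_{i=1}^d f(y_i) \;=\; \sum_{i=1}^d (y_i-1)\;-\;\sum_{i=1}^d \log y_i \;\le\; d\eps,
\]
so in particular $f(y_i)\le d\eps$ for each $i$. Since $f''(y)=1/y^2$, two elementary comparisons of second derivatives give the pointwise lower bounds $f(y)\ge (y-1)^2/2$ for $y\in(0,1]$ and $f(y)\ge (y-1)^2/(2y)$ for $y\ge 1$; I would prove both by noting that the difference of $f$ and each quadratic vanishes together with its first derivative at $y=1$, and has non-negative second derivative on the relevant interval.

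Next, for the \emph{lower} bound on $y_i$: the minimum $y_d$ is at most $1$, so the first comparison gives $(1-y_d)^2/2\le d\eps$, hence $y_d \ge 1-\sqrt{2d\eps}$. Since $\sqrt{2d\eps}\le 2d\sqrt{d\eps}$ whenever $d\ge 1$, this in fact implies the (weaker) target bound $y_i\ge 1-2d\sqrt{d\eps}$ for every $i$.

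For the \emph{upper} bound I would first rule out $y_1\ge 2$. Note that on $[1,\infty)$ the function $(y-1)^2/(2y) = y/2-1+1/(2y)$ is monotone increasing, so at $y=2$ it already equals $1/4$. Therefore if $y_1\ge 2$, then $d\eps\ge f(y_1)\ge 1/4$, contradicting the hypothesis $\eps\le 1/(16d)$, which gives $d\eps\le 1/16$. Hence $y_1<2$, and the second comparison yields
\[
(y_1-1)^2 \;\le\; 2y_1 \cdot d\eps \;\le\; 4d\eps,
\]
so $y_1\le 1+2\sqrt{d\eps}$, and by monotonicity $y_i\le y_1\le 1+2\sqrt{d\eps}$ for every $i$. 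Translating back via $z_i = G y_i$ gives both bounds.

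\textbf{Main obstacle.} The only subtle point is the upper bound: the naive inequality $(y-1)^2/(2y)\le d\eps$ is implicit in $y$ and, if one tries to bound $y_1$ using only the trivial pre-estimate $y_1\le d(1+\eps)$, the resulting bound carries an extra $\sqrt{d}$ factor. The trick is to separately rule out $y_1\ge 2$ using the \emph{absolute} value of $f(2)$, which is where the hypothesis $\eps\le 1/(16d)$ enters critically; once $y_1<2$ is known, the factor $y_1$ in the denominator is harmless and one recovers the sharp $2\sqrt{d\eps}$ rate.
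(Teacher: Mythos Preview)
Your argument is correct and complete. The normalization $y_i=z_i/G$ together with the identity $\sum_i f(y_i)=\sum_i(y_i-1)\le d\eps$ (using $\sum_i\log y_i=0$) cleanly isolates each $f(y_i)\le d\eps$, and your two elementary quadratic lower bounds on $f(y)=y-1-\log y$ give exactly the stated constants $2\sqrt{d\eps}$ above and (much more than) $2d\sqrt{d\eps}$ below. The bootstrapping step that rules out $y_1\ge 2$ via $f(y_1)\ge (y_1-1)^2/(2y_1)\ge 1/4>1/16\ge d\eps$ is the right way to close the implicit inequality without losing a factor of~$\sqrt d$.

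This is a genuinely different route from the paper's. The paper does not normalize or use the function $f$; instead it argues by contradiction on $\eta$ defined by $z_1=(1+\eta)G$, applies AM--GM to $z_2,\ldots,z_d$ to get the lower bound $\sum z_i\ge G\bigl(1+\eta+(d-1)(1+\eta)^{-1/(d-1)}\bigr)$, and then Taylor-expands the one-variable function $\eta\mapsto \eta+(d-1)(1+\eta)^{-1/(d-1)}$ near $\eta=0$ to force a contradiction. The lower bound on $z_d$ is then obtained from the upper bound on the other $z_i$ together with $\sum z_i\ge dG$. Your approach is more symmetric and avoids the somewhat delicate Taylor estimate; it also yields the sharper lower bound $z_d\ge(1-\sqrt{2d\eps})G$, comfortably stronger than what the proposition asks for. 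The paper's approach, on the other hand, stays within ``multiplicative'' AM--GM manipulations and never introduces a logarithm, which some readers may find more elementary.
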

\begin{proof}
First, we assert that
\begin{equation} \label{eq:assertion} z_1 \leq (1+4\sqrt{d\epsilon})G.\end{equation}
Let $\eta \geq 0$ be such that $z_1 = (1+\eta) G$,
and let $\eta_0 := 4\sqrt{d\epsilon}$; then $\eta_0 \leq 1/2$.
Assume for a contradiction that $\eta > \eta_0$. Then
\begin{align*} 
(1+\epsilon) Gd 
& \geq \sum_{i=1}^{d} z_i \\
& \geq z_1 + (d-1) \left( \prod_{i=2}^d z_i \right)^{1/(d-1)} \tag{AM-GM} \\
& = z_1 + (d-1) G^{d/(d-1)} z_1^{-1/(d-1)}  \\
& = G \left( 1+\eta + (d-1)  (1+\eta)^{-1/(d-1)} \right).
\end{align*}
The function $f:\eta \mapsto \eta + (d-1)(1+\eta)^{-1/(d-1)}$ has
$f'(\eta) >0 $ for all positive $\eta$, and is therefore strictly increasing on $[0, \infty)$. Hence,
\begin{align*} 
(1+\epsilon) d & \geq 1+\eta + (d-1)  (1+\eta)^{-1/(d-1)}\\
& > 1+\eta_0 + (d-1)  (1+\eta_0)^{-1/(d-1)} \\
& \geq d + \frac{d}{2(d-1)}\eta_0^2 - \frac{d(2d-1)}{6(d-1)^2} \eta_0^3 \\
& \geq d + \left(\frac{d}{2(d-1)}\eta_0^2 - \frac{d(2d-1)}{12(d-1)^2}\right)\eta_0^2\\ 
& \geq d+\tfrac{1}{4}\eta_0^2 ,
\end{align*}
contradicting the definition of $\eta_0$.

It follows from (\ref{eq:assertion}) that 
$$Gd \leq
\sum_{i=1}^{d} z_i
\leq (d-1)(1+\eta_0) G + z_d,$$
so
\begin{align*} 
z_d \geq G(1-(d-1)\eta_0) \geq G(1-d\eta_0),
\end{align*}
completing the proof.
\end{proof}

\begin{proof}[Proof of Theorem~\ref{thm:edge-iso-stability}]
Let $S \subset \Z^d$ with $|S| < \infty$ and
\begin{equation}\label{eq:small-boundary} |\partial S| \leq 2d |S|^{(d-1)/d}(1+\epsilon).\end{equation}
We may assume that
\begin{equation}\label{eq:epsilon-assumption} 
\epsilon \leq \frac{1}{72^2 d^{5}},
\end{equation}
otherwise the conclusion of the theorem holds trivially with $C = \emptyset$.

By \eqref{eq:am-gm} and (\ref{eq:small-boundary}), we have
\begin{align}
\label{eq:SvsG}
\left(\prod_{i=1}^{d} |\pi_{[d] \setminus \{i\}} S|\right)^{1/(d-1)} \leq \left(\frac{|\partial S|}{2d}\right)^{d/(d-1)} \leq (1+\epsilon)^{d/(d-1)} |S| \leq \frac{1}{1-2\epsilon}|S|.
\end{align}
Corollary \ref{cor:lw-stability} now implies that there exists a box 
$$R = R_1 \times R_2 \times \ldots \times R_d \subset \mathbb{Z}^d$$ such that
\begin{equation}
\label{eq:close}
|S \triangle R| \leq 2(64d+4d^2)(d-1) \epsilon |S| \leq 72d^3\epsilon |S|
\leq \delta |S|,\end{equation}
where
$$\delta:= \sqrt {d \epsilon}.$$
Our aim is to show that $R$ is close in symmetric difference to a cube.

Clearly, by (\ref{eq:close}), we have
\begin{equation}\label{eq:sizes-close} (1-\delta)|S| \leq |R| \leq (1+\delta)|S|.\end{equation}
Let
$$G: = \left( \prod_{i \in [d]} |\pi_{[d] \setminus \{i\}} S| \right)^{1/d}
\quad \text{and} \quad a: = |S|^{1/d}.$$
Note that, by (\ref{eq:SvsG}) and the Loomis-Whitney inequality, we have
\begin{equation} \label{eq:avsG} a^{d-1} = |S|^{(d-1)/d} \leq G \leq (1+\epsilon) |S|^{(d-1)/d} = (1+\epsilon)a^{d-1}.\end{equation}

By \eqref{eq:am-gm} and (\ref{eq:small-boundary}), we have
$$ \frac{1}{d} \sum_{i=1}^{d} |\pi_{[d] \setminus \{i\}} (S)| \leq \frac{|\partial S|}{2d} \leq (1+\epsilon) |S|^{(d-1)/d} \leq (1+\epsilon)\left( \prod_{i \in [d]} |\pi_{[d] \setminus \{i\}} (S)| \right)^{1/d}.$$

Hence, by Proposition \ref{prop:AM-GM-stability}, all the $(d-1)$-dimensional projections of $S$ are of roughly equal size:
\begin{align}
\label{eqn:ConcStabAMGM}
\forall i \in [d] ,\qquad (1-2 d \delta ) G \leq |\pi_{[d] \setminus \{i\}} (S)| \leq
(1+2\delta) G.\end{align}
We now show that all the 1-dimensional projections of $R$ are of roughly equal size.

\begin{claim}
\label{claim:approx-size1}
For every $i \in [d]$,
we have $(1-7 \delta) a \leq |R_i| \leq (1+ 14 d \delta)  a$.
\end{claim}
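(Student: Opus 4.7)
The plan is to establish the two bounds on $|R_i|$ separately, first a lower bound, and then an upper bound that uses the lower bounds on the other coordinates together with the product structure of $R$. The key quantitative inputs are already in hand: from (\ref{eq:sizes-close}) we have $(1-\delta)a^d \leq |R| \leq (1+\delta)a^d$, and from (\ref{eqn:ConcStabAMGM}) combined with (\ref{eq:avsG}) we have $|\pi_{[d]\setminus\{i\}}(S)| \leq (1+3\delta)a^{d-1}$ for every $i$ (the bound $(1+2\delta)(1+\epsilon) \leq 1+3\delta$ follows from (\ref{eq:epsilon-assumption}), which makes $\delta$ and $\epsilon$ small).

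For the lower bound, I exploit the fact that $S \cap R \subseteq R$, so every fiber of $S \cap R$ in the $i$-th coordinate direction is contained in a fiber of $R$ and therefore has at most $|R_i|$ elements. Projecting onto $[d] \setminus \{i\}$ and counting,
\[|S \cap R| \;\leq\; |R_i|\cdot |\pi_{[d]\setminus\{i\}}(S\cap R)| \;\leq\; |R_i|\cdot |\pi_{[d]\setminus\{i\}}(S)|.\]
Combined with $|S \cap R| \geq (1-\delta)a^d$ (which follows from (\ref{eq:close})), this gives
\[|R_i| \;\geq\; \frac{(1-\delta)a}{1+3\delta} \;\geq\; (1-7\delta)\,a,\]
as desired.

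For the upper bound, I use that since $R$ is a Cartesian product,
\[|R_i| \;=\; \frac{|R|}{\prod_{j \neq i} |R_j|}.\]
The numerator is at most $(1+\delta)a^d$ by (\ref{eq:sizes-close}). For the denominator I feed in the lower bound just obtained: applying it to every $j \neq i$ and using Bernoulli's inequality,
\[\prod_{j\neq i}|R_j| \;\geq\; (1-7\delta)^{d-1}a^{d-1} \;\geq\; \bigl(1-7(d-1)\delta\bigr)a^{d-1}.\]
Substituting yields $|R_i| \leq \tfrac{(1+\delta)a}{1-7(d-1)\delta} \leq (1+14d\delta)a$, where the last inequality again uses (\ref{eq:epsilon-assumption}) to make $\delta$ sufficiently small (one checks that $98d^2\delta < 7d-1$).

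The subtlety to watch out for, and the reason the two bounds are not attacked symmetrically, is that one cannot afford to compare $|\pi_{[d]\setminus\{i\}}(R)|$ directly with $|\pi_{[d]\setminus\{i\}}(S)|$. The only bound available on their difference is the crude $|\pi_{[d]\setminus\{i\}}(R\triangle S)| \leq |R\triangle S| \leq \delta a^d$, which is enormous relative to $a^{d-1}$ whenever $a$ is large (and $a$ is unbounded in the theorem). My lower-bound argument avoids this trap by bounding $|S \cap R|$ through its fibers in $R$ so that $|R_i|$ appears explicitly; once one $j$-by-$j$ lower bound is in place, the product structure of $R$ automatically converts it into an upper bound for each $|R_i|$.
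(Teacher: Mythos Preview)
Your proof is correct. The upper bound is identical to the paper's: divide $|R|$ by the product of the other $|R_j|$'s, feed in the lower bounds, and simplify.

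For the lower bound you take a slightly different (and cleaner) route than the paper. The paper first establishes, by contradiction, that
\[
(1-2\delta)\,|\pi_{[d]\setminus\{i\}}(R)| \;\leq\; |\pi_{[d]\setminus\{i\}}(S)|,
\]
arguing that any element of $\pi_{[d]\setminus\{i\}}(R)\setminus \pi_{[d]\setminus\{i\}}(S)$ contributes a full $R_i$-fiber to $R\setminus S$; it then computes $|R_i| = |R|/|\pi_{[d]\setminus\{i\}}(R)|$. You instead bound $|S\cap R| \leq |R_i|\cdot|\pi_{[d]\setminus\{i\}}(S)|$ directly from the fact that every $i$-fiber of $S\cap R$ sits inside $R_i$, and solve for $|R_i|$. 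Both arguments rest on the same structural observation (the $i$-fibers of $R$ have size exactly $|R_i|$); yours simply avoids the detour through a separate contradiction step.

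One small comment on your closing paragraph: the paper \emph{does} compare $|\pi_{[d]\setminus\{i\}}(R)|$ with $|\pi_{[d]\setminus\{i\}}(S)|$, just not via the crude $|\pi(R\triangle S)|\leq|R\triangle S|$ bound you (rightly) dismiss. So the obstruction you describe is real for the naive approach, but it is not the reason the two bounds must be handled asymmetrically---the paper's lower bound goes through the projection comparison, and the asymmetry is only that the upper bound bootstraps from the lower bounds already in hand.
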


\begin{proof}
Without loss of generality, we prove the claim for $i=1$.
Observe that
\begin{equation}\label{eq:projection-sizes} (1- 2 \delta) |\pi_{\{2,\ldots,d\}} (R)|
\leq |\pi_{\{2,\ldots,d\}} (S)|.\end{equation}
Indeed, if this does not hold, then, using (\ref{eq:sizes-close}), we have
$$|S \triangle R| \geq |R \setminus S| >  2 \delta |R| \geq 2(1-\delta)|S| \geq \delta |S|,$$
contradicting (\ref{eq:close}). Therefore, using (\ref{eq:sizes-close}), (\ref{eq:projection-sizes}), \eqref{eqn:ConcStabAMGM} and (\ref{eq:avsG}) successively, we have
\begin{align*} |R_1| & = \frac{|R|}{|\pi_{\{2,3,\ldots,d\}} (R)|} \\
& \geq \frac{(1- 2 \delta)(1-\delta)|S|}{|\pi_{\{2,\ldots,d\}}(S)|} \\
& \geq \frac{(1- 2 \delta)(1-\delta)|S|}{(1+ 2 \delta)G} \\
& \geq \frac{(1- 2 \delta)(1-\delta)|S|}{(1+ 2\delta)(1+\epsilon)|S|^{(d-1)/d}} \\
& \geq (1-7 \delta)|S|^{1/d}\\
& = (1-7 \delta)a.
\end{align*}
Similarly, we have
\begin{equation} \label{eq:Rlarge} \forall i \in [d],\qquad |R_i| \geq  (1-7 \delta)a.\end{equation}
Hence, using (\ref{eq:sizes-close}) and (\ref{eq:Rlarge}) we obtain
\begin{align*} |R_1| &= \frac{|R|}{\prod_{i=2}^{d} |R_j|}\\
& \leq \frac{(1+\delta)|S|}{(1-7\delta)^{d-1} |S|^{(d-1)/d}}\\
& \leq (1+14 d\delta)|S|^{1/d}\\
& = (1+14 d\delta)a.
\end{align*}
\end{proof}

Next, for each $i \in [d]$ we throw away the elements $c \in R_i$ such that $\{x \in R:\ x_i = c\}$ contains few elements of $S$, producing a slightly smaller box $R'$.

Fix $i \in [d]$; without loss of generality, $i=1$. Define 
$$Q: = R_2 \times R_3 \times \ldots \times R_d.$$ 

For each $c \in R_1$, call $c$ {\em heavy} if $|S \cap \{x \in R:\ x_1=c\}| \geq 7 |Q| /8$, and call $c$ {\em light} otherwise. We assert that there are at most $16 \delta |R_1|$ light elements in $R_1$. Indeed, if the number of light elements is larger than this, then
 $$|S \triangle R| \geq |R \setminus S| > (|Q|/8) \cdot 16\delta  |R_1| = 2 \delta |R| \geq 2\delta(1-\delta) |S| \geq \delta |S|,$$
contradicting (\ref{eq:close}).

Let $R_1' : = \{c \in R_1:\ c \textrm{ is heavy}\}$. Then $|R_1 \setminus R_1'| \leq 16\delta|R_1|$. Define $R_i'$ similarly for each $i \in \{2,3,\ldots,d\}$, and let
$$R' = R_1' \times R_2' \times \ldots \times R_d'.$$

By the union bound, we have
\begin{equation} \label{eq:R'R} |R \setminus R'| \leq 16d\delta |R|,\end{equation}
and therefore
\begin{equation}\label{eq:R'large} |R'| \geq (1-16d\delta)|R|.\end{equation}

Our next step is to show that, for each $i \in [d]$, $R_i'$ occupies most of the interval in which it is contained, as if not, $|\partial S|$ would be too large.

\begin{claim}
\label{claim:close-to-interval1}
For each $i \in [d]$, 
we have $\max(R_i') - \min(R_i')+1 - |R_i| \leq 8\delta |R_i|$.
\end{claim}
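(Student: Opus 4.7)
WLOG take $i=1$; write $c_{\min}:=\min R_1'$, $c_{\max}:=\max R_1'$, $L:=c_{\max}-c_{\min}+1$, and $Q:=R_2\times\cdots\times R_d$. I argue by contradiction, supposing $L>(1+8\delta)|R_1|$, and will derive a lower bound on $|\partial S|$ exceeding the hypothesis $|\partial S|\leq (1+\epsilon)\,2da^{d-1}$.

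First, combine the hypothesis with the lower bound $|\partial S|\geq 2\sum_j|\pi_{[d]\setminus\{j\}}(S)|\geq 2d(1-2d\delta)a^{d-1}$ from the proof of Theorem~\ref{thm:edge-iso} and \eqref{eqn:ConcStabAMGM}; this forces the per-direction excess $\Delta_j:=|\partial_j(S)|-2|\pi_{[d]\setminus\{j\}}(S)|$ to satisfy $\sum_j \Delta_j\leq 2d(\epsilon+2d\delta)a^{d-1}\leq 6d^2\delta a^{d-1}$ (using $\epsilon\leq \delta^2/d$ from \eqref{eq:epsilon-assumption}), and in particular $\Delta_1\leq 6d^2\delta a^{d-1}$. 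Writing $F_y:=\{c\in\mathbb{Z}:(c,y)\in S\}$ for the direction-$1$ fiber at $y\in\mathbb{Z}^{d-1}$ and letting $k_y$ denote the number of maximal sub-intervals of $F_y$, we have $|\partial_1 S|=2\sum_y k_y$ and $|\pi_{[d]\setminus\{1\}}(S)|=\#\{y:F_y\neq \emptyset\}$, so the total fiber-gap count satisfies
$$\sum_{y:F_y\neq\emptyset}(k_y-1)\;=\;\Delta_1/2\;\leq\; 3d^2\delta a^{d-1}.$$

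Now exploit that both $c_{\min}$ and $c_{\max}$ are heavy: each of the slabs $\{x_1=c_{\min}\}$ and $\{x_1=c_{\max}\}$ catches at least $7|Q|/8$ fibers $y\in Q$, so by inclusion--exclusion the set $T:=\{y\in Q:c_{\min},c_{\max}\in F_y\}$ of \emph{spanning} fibers has $|T|\geq 3|Q|/4$. For $y\in T$, either $F_y\supseteq[c_{\min},c_{\max}]$ (in which case $|F_y|\geq L$ and I call $y$ \emph{filled}), or $F_y$ has at least one hole in $[c_{\min},c_{\max}]$ (in which case $k_y\geq 2$ and $y$ contributes $\geq 1$ to the gap budget above). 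Hence the number of filled spanning fibers is $|T_1|\geq 3|Q|/4-3d^2\delta a^{d-1}$, and since distinct fibers are disjoint, $L\,|T_1|\leq \sum_{y\in Q}|F_y|\leq|S|\leq (1+\delta)|R_1||Q|$, which yields an upper bound on $L$.

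The main obstacle is that a direct execution of this estimate gives only $L\leq (4/3+O(d^2\delta))|R_1|$, because the threshold $7/8$ in the definition of heavy caps the spanning fraction at $3/4$. To recover the sharp bound $L\leq (1+8\delta)|R_1|$, I expect one must pass to a \emph{very heavy} subset $R_1''\subseteq R_1'$: using $|S\cap R|\geq (1-2\delta)|R|$ together with the fact that each column in $R_1\setminus R_1'$ contributes at most $7|Q|/8$ to $|S\cap R|$, one obtains $\sum_{c\in R_1'}s_c^Q\geq (1-16\delta)|R_1||Q|$, where $s_c^Q:=\#\{y\in Q:(c,y)\in S\}$; Markov then gives that all but an $O(\delta)$-fraction of $c\in R_1'$ satisfy $s_c^Q\geq (1-O(\delta))|Q|$. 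The spanning argument applied to this very heavy core yields $\max R_1''-\min R_1''+1\leq (1+O(\delta))|R_1|$, and the residual tails $\max R_1'-\max R_1''$ and $\min R_1''-\min R_1'$ are each $O(\delta|R_1|)$, because an isolated heavy column adjacent to a non-heavy slab would force $\Omega(|Q|)$ direction-$1$ boundary edges, again controlled by the budget $\Delta_1\leq 6d^2\delta a^{d-1}$. This bookkeeping step is where I expect the most care to be needed.
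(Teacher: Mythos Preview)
Your argument has a genuine gap, and the workaround you sketch does not close it.

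The volume bound $L\,|T_1|\le |S|$ is being used in the wrong direction. As you correctly observe, with $|T_1|\ge \tfrac{3}{4}|Q|-O(d^2\delta\,a^{d-1})$ it can only yield $L\le (\tfrac{4}{3}+o(1))|R_1|$, far from the target $L\le(1+8\delta)|R_1|$. Your proposed fix via a ``very heavy'' core $R_1''$ does not work as stated: from a total deficit $\sum_{c\in R_1'}(|Q|-s_c^Q)=O(\delta)|R|$, Markov's inequality gives a \emph{tradeoff} between the threshold $\theta$ and the exceptional fraction, of the form $|\{c:s_c^Q<(1-\theta)|Q|\}|\le O(\delta/\theta)|R_1|$. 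You cannot get both $\theta=O(\delta)$ and an $O(\delta)$-fraction of exceptions; the best balanced choice is $\theta\sim\sqrt{\delta}$, which only yields $L''\le (1+O(\sqrt{\delta}))|R_1|$. The final tail-control step (``an isolated heavy column adjacent to a non-heavy slab would force $\Omega(|Q|)$ boundary edges'') is also unjustified: a heavy column can sit next to a column with $s_c^Q$ just below $7|Q|/8$ and generate almost no boundary between them.

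The missing idea is to \emph{upper}-bound the number of filled fibers using the hypothesis $|S\setminus R|\le\delta|S|$. Under the contradiction assumption $L>(1+8\delta)|R_1|$, the interval $[c_{\min},c_{\max}]$ contains more than $8\delta|R_1|$ integers not in $R_1$; every filled fiber $y\in T_1$ therefore contributes more than $8\delta|R_1|$ points to $S\setminus R$. Hence if $|T_1|>\tfrac14|Q|$, then $|S\setminus R|>8\delta|R_1|\cdot\tfrac14|Q|=2\delta|R|\ge\delta|S|$, contradicting \eqref{eq:close}. So $|T_1|\le\tfrac14|Q|$, whence the number of gappy spanning fibers is at least $|T|-|T_1|\ge\tfrac12|Q|$, and your own gap-budget bound gives
\[
\tfrac12|Q|\le\sum_y(k_y-1)=\Delta_1/2\le 3d^2\delta\,a^{d-1},
\]
i.e.\ $(1-15d\delta)a^{d-1}\le 6d^2\delta\,a^{d-1}$, which is false under \eqref{eq:epsilon-assumption}. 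This is exactly the paper's argument, recast in your language: once you replace the volume bound on $|T_1|$ by this $|S\setminus R|$-based upper bound, your $\Delta_1$-budget framework finishes the proof immediately, with no need for $R_1''$.
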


\begin{proof}
Without loss of generality, we prove the claim for $i=1$. 
Let 
$$u := \min(R_1'), \quad v := \max(R_1').$$ 
Suppose for a contradiction that
\begin{equation} \label{eq:too-wide} v - u+1 - |R_1| > 8 \delta |R_1|.\end{equation}
 
Define
\begin{align*} 
U & := \pi_{\{2,3,\ldots,d\}}(S \cap \{x \in R:\ x_1=u\}),\\
V & := \pi_{\{2,3,\ldots,d\}}(S \cap \{x \in R:\ x_1=v \}),\\
F & := \{y \in Q:\ (x_1,y) \in S\ \forall x_1 \in \{u,u+1,\ldots,v-1,v\}\};
\end{align*}
note that $U,V,F \subset Q$. Since $|S| < \infty$, for each $z \in U$, there is at least one edge in $\partial_1(S)$ of the form $\{(x_1,z),(x_1+1,z)\}$, where $x_1 < u$. Similarly, for each $z \in V$, there is at least one edge in $\partial_1(S)$ of the form $\{(x_1,z),(x_1+1,z)\}$, where $x_1 \geq v$. Moreover, for each $z \in (U \cap V) \setminus F$, there is at least one edge in $\partial_1(S)$ of the form $\{(x_1,z),(x_1+1,z)\}$ where $u \leq x_1 < v$. It follows that
\begin{align}
\label{eq:extra-boundary} |\partial_1(S)| & \geq |U| + |V| + |(U \cap V)\setminus F| \nonumber \\
& = |U| + |V| + |U \cap V| - |F| \nonumber \\
 &\geq \tfrac{7}{8}|Q| + \tfrac{7}{8}|Q| + \tfrac{3}{4}|Q| - |F| \nonumber \\
 &= \tfrac{5}{2}|Q|-|F|.
\end{align}

We now assert that
\begin{equation}\label{eq:F-bound} |F| \leq \tfrac{1}{4}|Q|.\end{equation}
Indeed, suppose on the contrary that \eqref{eq:F-bound} does not hold.
By (\ref{eq:too-wide}), the interval $\{u,u+1,\ldots,v\}$ contains more than $8 \delta |R_1|$ elements not in $R_1$.
Therefore,
$$|S \setminus R| > 8 \delta |R_1| \cdot |F| > 8 \delta |R_1| \cdot \tfrac{1}{4}|Q| = 2\delta |R| \geq \delta|S|,$$
contradicting (\ref{eq:close}).

Using Claim \ref{claim:approx-size1} and (\ref{eq:close}), we have
\begin{equation} \label{eq:Q-lower-bound} 
|Q| = \frac{|R|}{|R_1|} \geq \frac{(1-\delta)a^d}{(1+ 14 d\delta)a} \geq (1-15 d\delta) a^{d-1} .
\end{equation}

Hence, by~\eqref{eq:F-bound} and~\eqref{eq:extra-boundary}, we have
$$|\partial_1(S)| \geq 2|Q| + \tfrac{1}{4}|Q| \geq (2+\tfrac{1}{4})(1-15 d\delta) a^{d-1}.$$
But we also have
$$\forall i \in \{2,3,\ldots,d\}, \qquad |\partial_i(S)| \geq 2|\pi_{[d]\setminus \{i\}}(S)| \geq 2(1-2d\delta)G \geq 2(1-2d\delta)a^{d-1},$$
using (\ref{eq:avsG}) and (\ref{eqn:ConcStabAMGM}). Hence, using (\ref{eq:epsilon-assumption}), we have
\begin{align*} |\partial S| & = \sum_{i=1}^{d}|\partial_i(S)|\\
& \geq (2+\tfrac{1}{4})(1-15 d\delta)a^{d-1} + (d-1)\cdot 2(1-2d\delta)a^{d-1}\\
& \geq 2da^{d-1}(1+\tfrac{1}{16d}-2d\delta) \\
& > 2da^{d-1}(1+\epsilon) ,\end{align*}
contradicting (\ref{eq:small-boundary}).
\end{proof}

We can now complete the proof of the theorem.
Let
$$L = \max\{\max(R_i') - \min(R_i') + 1:\ i \in [d]\}.$$
Assume without loss of generality that
$$L = \max(R_1') - \min(R_1') + 1.$$
Define
$$C =  \prod_{i=1}^{d} 
\{\min(R_i'),\min(R_i')+1,\ldots, \min(R_i') + L-1\}.$$
The set $C$ is a cube of side-length $L$ containing $R'$.
Using Claims~\ref{claim:approx-size1}
and~\ref{claim:close-to-interval1}, we have
\begin{align*} L-a 
& = \max(R_1') - \min(R_1') + 1 - |R_1| + (|R_1| - a)\\
& \leq 8\delta (1+14d\delta) a + 14d\delta a\\
& \leq 20 d \delta a.
\end{align*}
Therefore, using (\ref{eq:R'large}) and (\ref{eq:sizes-close}), we have
\begin{align*} 
|R' \triangle C| 
& = L^d - |R'|\\
& \leq (1+20d\delta)^d a^d - (1-16d\delta)(1-\delta)a^d\\
& \leq (1+40d^2\delta)a^d - (1-17d\delta)a^d\\
& \leq 50d^2 \delta a^d.
\end{align*}
Finally, using (\ref{eq:R'R}), (\ref{eq:close}) and (\ref{eq:sizes-close}), we obtain
\begin{align*} |S \triangle C| & \leq |S \triangle R| + |R \triangle R'| + |R' \triangle C|\\
& \leq \delta a^d + 16d\delta (1+\delta)a^d + 50d^2 \delta a^d\\
& \leq 60d^2 \delta a^d\\
& = 60 d^{5/2} \sqrt{\epsilon} |S|,
\end{align*}
proving the theorem.
\end{proof}

\begin{remark}
\label{remark:iso-sharpness}
As observed in the Section~\ref{sec:intro}, Theorem \ref{thm:edge-iso-stability} is sharp up to a constant factor depending upon $d$ alone. To see this, take $S = [a]^{d-1} \times [b]$, where $b = (1+\phi) a$, $\phi >0$ and $a,b \in \mathbb{N}$, i.e. $S$ is a `cuboid'. Then
$$|S| = a^{d-1}b = (1+\phi)a^d,$$
and 
$$|S \triangle C| \geq (b-a) a^{d-1} = \phi a^{d} = \frac{\phi}{1+\phi}|S|$$
for all cubes $C \subset \mathbb{Z}^d$. On the other hand, we have
\begin{align*}
|\partial S| - 2d|S|^{(d-1)/d} & = 2a^{d-1} + 2(d-1) a^{d-2}b - 2d (a^{d-1}b)^{(d-1)/d}\\
& = 2a^{d-1}(1+(d-1)(1+\phi) - d(1+\phi)^{(d-1)/d})\\
& \leq 2a^{d-1}(d+\phi(d-1) - d[1+\tfrac{d-1}{d} \phi - \tfrac{d-1}{2d^2} \phi^2]) = \tfrac{d-1}{d} \phi^2 a^{d-1}.
\end{align*}
Hence, we have $|\partial S| = (1+\epsilon)2d|S|^{(d-1)/d}$, where 
$$\epsilon = \frac{|\partial S| - 2d|S|^{(d-1)/d}}{2d|S|^{(d-1)/d}} \leq \frac{\tfrac{d-1}{d} \phi^2 a^{d-1}}{2da^{d-1} (1+\phi)^{(d-1)/d}} \leq \frac{\phi^2}{2d},$$
but $S$ is $\delta$-far from any cube, where 
$$\delta = \frac{\phi}{1+\phi} \geq \tfrac{1}{2}\phi \geq \tfrac{1}{2} \sqrt{2d\epsilon}.$$
\end{remark}

\section{Conclusion and open problems}\label{sec:conclusion}

We have proved stability results for the Uniform Cover inequality (Theorem~\ref{thm:uniform-cover-stability}), and
the edge-isoperimetric inequality in the $d$-dimensional
integer lattice (Theorem~\ref{thm:edge-iso-stability}).

We conjecture that the dependence on $d$ in Theorem \ref{thm:uniform-cover-stability} can be removed:

\begin{conjecture}
Let $d \in \mathbb{N}$ with $d \geq 2$. Let $S \subset \mathbb{Z}^d$ with $|S| < \infty$. If
$$|S| \geq (1-\eps) \left( \prod_{g \in \G} |\pi_g S| \right)^{1/m(\G)},$$
then there exists a box $B \subset \mathbb{Z}^d$ such that
$$|S \Delta B| \leq c \rho(\G) \eps\,|S|,$$
where $c$ is an absolute constant.
\end{conjecture}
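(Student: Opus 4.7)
The plan is to prove the conjecture by induction on the dimension $d$, carefully tracking constants so that they telescope to a bounded limit rather than accumulate linearly. The base case $d=2$ is already handled by Lemma~\ref{lem:RisZ2}, which provides the absolute constant $20$; combined with Lemma~\ref{lem:TightToInfo} and Claim~\ref{clm:dHvsdD}, this yields the conjecture in dimension two with an absolute constant.

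For the inductive step, assume the conjecture in dimension $d-1$ with absolute constant $c_0$. Given $S \subset \Z^d$ that is $(1-\eps)$-tight for a uniform cover family $\G$, single out coordinate $1$: by Lemma~\ref{lem:TightToInfo} and Claim~\ref{clm:dHvsdD}, one obtains $\Hole_1(S) \leq 2\rho(\G)\eps$. Apply Lemma~\ref{lem:FindBox1} with a fixed absolute constant $\alpha$ (say $\alpha=1/2$), rather than the dimension-dependent choice $\alpha=1/d$ used in the main proof, to extract a set $R_1 \subset \Z$ capturing all but $O(\rho(\G)\eps)$ of $p_1$'s mass, and with ``balanced'' fibers. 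The next step is to show that the typical $(d-1)$-dimensional fiber $S_{x_1} := \{y \in \Z^{d-1} : (x_1,y) \in S\}$ is itself $(1-O(\rho(\G)\eps))$-tight for a suitable uniform cover inequality on $\Z^{d-1}$: the natural candidate for the reduced family is $\G' := \{g \setminus \{1\} : g \in \G,\ 1 \in g\} \cup \{g \in \G : 1 \notin g\}$, which is easily verified to be a uniform cover (or close to one) of $[d]\setminus\{1\}$ with comparable parameters $m(\G')$ and $\sigma(\G')$. By the inductive hypothesis, each typical fiber is close in symmetric difference to a $(d-1)$-dimensional box; the smallness of $I(p_{\{1\}}; p_{[d]\setminus\{1\}})$ should then force these boxes to coincide, up to negligible error, across different values of $x_1 \in R_1$, yielding a common box $R_2 \times \cdots \times R_d$. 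Finally one verifies that $S$ is close to $R := R_1 \times R_2 \times \cdots \times R_d$.

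The main obstacle is preventing the inductive constants from blowing up. A naive execution of the above produces a recurrence of the form $c_d \leq c_{d-1} + C$ for some absolute $C$, which only gives $c_d = O(d)$ and not a bounded limit. To obtain a truly absolute constant, the inductive step must \emph{contract}: the tightness deficit seen by the inductive hypothesis must be a strict fraction of the original one. Achieving this appears to require a quantitatively sharper, \emph{conditional} version of Lemma~\ref{lem:FindBox1}, in which the extra regularity of the fibers (inherited from the smallness of the mutual information $I(p_{\{1\}}; p_{[d]\setminus\{1\}})$) is used to bound the new hole-weight by a definite fraction of $\Hole_1(S)$. An alternative, perhaps cleaner, route would be to abandon induction in favour of a simultaneous $d$-coordinate trimming procedure, choosing thresholds for all coordinates jointly so that neither the $1/\alpha$ factor nor the $(1-\alpha)^d$ factor of the current proof ever appears; the technical heart would then be to bound $|R \setminus S|$ directly via a $d$-dimensional analogue of Claim~\ref{clm:dHvsdD}, sidestepping the telescoping identity~\eqref{telescope} whose $d-1$ terms are the other source of dimensional loss.
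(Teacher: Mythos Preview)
The statement you are attempting is a \emph{conjecture} in the paper, appearing in the concluding ``open problems'' section; the paper does \emph{not} prove it. The paper establishes only the weaker Theorem~\ref{thm:uniform-cover-stability} with the dimension-dependent constant $b(d)=4d^2+64d$, and explicitly records the absolute-constant version as open. There is therefore no ``paper's own proof'' to compare against.

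Your proposal is not a proof either, and you essentially say so yourself. Concretely:
\begin{itemize}
\item You admit that a ``naive execution'' of the induction gives the recurrence $c_d \le c_{d-1}+C$, hence only $c_d=O(d)$, and that obtaining contraction ``appears to require'' a sharper conditional trimming lemma that you do not supply. The alternative ``simultaneous $d$-coordinate trimming'' is likewise only a suggestion.
\item Even before that obstacle, the sketch has genuine gaps. The assertion that a typical fibre $S_{x_1}$ is $(1-O(\rho(\G)\eps))$-tight for the reduced family $\G'$ is not justified: tightness of $S$ is a single global inequality relating $|S|$ to $\prod_{g\in\G}|\pi_g(S)|$, and it does not descend to fibres (individual fibres can be arbitrarily far from tight while $S$ is globally close to tight). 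Nor is it clear that $\G'$ retains $\sigma(\G')>0$ in general.
\item The step ``smallness of $I(p_{\{1\}};p_{[d]\setminus\{1\}})$ forces the fibrewise boxes to coincide'' is a hope, not an argument: small mutual information controls the conditional \emph{distributions} $p(\cdot\,|\,x_1)$, but two distinct boxes can support nearby distributions, so one needs an additional rigidity argument you do not provide.
\end{itemize}
In short, the conjecture remains open, and your proposal identifies plausible lines of attack but does not carry any of them through.
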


We also conjecture that the dependence upon $d$ in Theorem \ref{thm:edge-iso-stability} can be improved to $\Theta(\sqrt{d})$:
\begin{conjecture}
Let $S \subset \mathbb{Z}^d$ with $|S| < \infty$ and with
$$|\partial S| \leq 2d |S|^{(d-1)/d}(1+\epsilon).$$
Then there exists a cube $C \subset \mathbb{Z}^d$ such that $|S \Delta C| \leq c' \sqrt{d\epsilon}\, |S|$, where $c'$ is an absolute constant.
\end{conjecture}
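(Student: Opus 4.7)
The plan is to follow the outline of the proof of Theorem~\ref{thm:edge-iso-stability} and sharpen each of its two main steps, saving a total factor of $d^2$ (the gap between the current $60d^{5/2}\sqrt{\epsilon}$ bound and the conjectured $c'\sqrt{d\epsilon}$ bound). Write $\delta := \sqrt{d\epsilon}$ and $a := |S|^{1/d}$. The first step is to apply a strengthened Loomis-Whitney stability result---namely, Conjecture~1 specialized to the covering family $\G = [d]^{(d-1)}$, for which $\rho(\G) = d-1$---to obtain a box $R = R_1\times\cdots\times R_d$ with $|S \triangle R| \leq c(d-1)\epsilon\,|S|$. Under the assumption $\epsilon \leq 1/d$ (which is safe since the conclusion is trivial otherwise), this gives $|S \triangle R| \leq O(\delta)\,|S|$, saving a factor of $d^2$ compared to the use of Corollary~\ref{cor:lw-stability} in the current proof.

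The second step is to show that the box $R$ is close in symmetric difference to a cube. I would use Proposition~\ref{prop:AM-GM-stability} on the projection sizes $|\pi_{[d]\setminus\{i\}}(S)|$ just as in the current argument, but would then sharpen Claim~\ref{claim:approx-size1} to $|R_i| = a(1 \pm O(\delta/d))$ and Claim~\ref{claim:close-to-interval1} to $\max(R_i') - \min(R_i') + 1 - |R_i| \leq O(\delta/d)\,|R_i|$. With both refinements in place, the side length $L$ of the candidate cube satisfies $L - a \leq O(\delta a/d)$, so that $L^d - a^d \leq O(\delta)\,a^d$, as required. The sharper approximate-size bound likely requires obtaining an upper bound on each $|R_i|$ directly, rather than via the identity $|R_i| = |R|/\prod_{j\ne i}|R_j|$ (which inherently loses a factor of $d$ upon exponentiation); a natural route is to run the trimming procedure of Lemma~\ref{lem:FindBox1} in all $d$ directions jointly and track the resulting constraints among the $R_i$.

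The principal obstacle is the sharpened close-to-interval claim. The argument in Claim~\ref{claim:close-to-interval1} accounts only for the extra direction-$i$ boundary produced by a gap in $R_i$, which can contribute at most a $1/d$-fraction of the total boundary, and therefore can detect gaps only of size $\Omega(\delta\,|R_i|)$. To detect gaps at the scale $\delta\,|R_i|/d$, one would need to extract extra boundary contributions from several coordinate directions simultaneously, which seems to require a global rearrangement argument such as a Bollob\'as-Leader style compression~\cite{bollobas-leader-grid}: replace $S$ on each axis-parallel line by an initial segment of the same size, observe that this can only decrease $|\partial S|$, and quantify how much $S$ moves in the process. A more ambitious alternative is to bypass Loomis-Whitney altogether and argue directly from the near-tight edge-isoperimetric hypothesis, perhaps via a spectral or entropic argument on $\mathbb{L}^d$; this would also remove the dependence of the proof on Conjecture~1.
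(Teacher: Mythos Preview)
This statement is a \emph{conjecture} in the paper, not a theorem; it appears in Section~\ref{sec:conclusion} as an open problem, and the paper provides no proof of it. So there is no ``paper's own proof'' to compare against.

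Your proposal is honest about being an outline rather than a proof, but it is worth being explicit about the gaps. First, the entire argument is conditional on the other open conjecture in the same section (the dimension-free stability for the Uniform Cover inequality); without that, the very first step already fails to gain the factor of $d^2$ you need. Second, even granting that conjecture, you correctly identify the sharpening of Claim~\ref{claim:close-to-interval1} as the ``principal obstacle'' and then do not resolve it: you only sketch that a compression or spectral argument \emph{might} work. That claim is genuinely the crux---the current boundary-counting argument is inherently local to a single coordinate direction and cannot detect gaps at scale $\delta|R_i|/d$, exactly as you say---so until you have a concrete mechanism that extracts an $\Omega(1)$-fraction of boundary from a gap of that size, the proposal remains a plausible research plan rather than a proof.

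In short: the paper has no proof for you to diverge from, and your write-up is a reasonable roadmap toward the conjecture but is not itself a proof, since it assumes one open problem and leaves the key technical refinement unresolved.
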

This would be sharp up to the value of $c'$, by the cuboid example in Remark~\ref{remark:iso-sharpness}.


\section*{Acknowledgments} 
The authors would like to thank Itai Benjamini and Emanuel Milman for helpful conversations and comments, and an anonymous referee for several helpful suggestions and remarks.


\begin{dajauthors}
\begin{authorinfo}[david]
  David Ellis\\
  School of Mathematical Sciences,\\
  Queen Mary, University of London,\\
  Mile End Road,\\
  London E1 4NS,\\
  United Kingdom.\\
  d\imagedot{}ellis\imageat{}qmul\imagedot{}ac\imagedot{}uk
\end{authorinfo}
\begin{authorinfo}[ehud]
  Ehud Friedgut\\
  Department of Mathematics,\\
  Weizmann Institute of Science,\\
  Rehovot 76100,\\
  Israel.\\
  ehud.friedgut\imageat{}gmail\imagedot{}com
\end{authorinfo}
\begin{authorinfo}[guy]
  Guy Kindler\\
  School of Computer Science and Engineering,\\
  The Hebrew University of Jerusalem,\\
  Edmond J. Safra Campus,\\
  Jerusalem 91904,\\
  Israel.\\
  gkindler\imageat{}cs\imagedot{}huji\imagedot{}ac\imagedot{}il
\end{authorinfo}
\begin{authorinfo}[amir]
  Amir Yehudayoff\\
  Department of Mathematics,\\
  Technion-IIT,\\
  Haifa 32000,\\
  Israel.\\
  amir.yehudayoff\imageat{}gmail\imagedot{}com
\end{authorinfo}
\end{dajauthors}

\end{document}
